\numberwithin{equation}{section}
\def\sO{{\mathscr O}}
\newcommand{\CC}{\mathbb{C}}
\newcommand{\PP}{\mathbb{P}}
\newcommand{\ZZ}{\mathbb{Z}}
\def\cO{{\cal O}}
\newcommand{\tE}{\tilde{E}}
\def\and{\quad{\rm and}\quad}
\def\mapright#1{\,\smash{\mathop{\lra}\limits^{#1}}\,}
\newtheorem{prop}{Proposition}[section]
\newtheorem{theo}[prop]{Theorem}
\newtheorem{lemm}[prop]{Lemma}
\newtheorem{rema}[prop]{Remark}
\newtheorem{exam}[prop]{Example}
\newtheorem{ques}[prop]{Question}
\newtheorem{defi}[prop]{Definition}
\newtheorem{conj}[prop]{Conjecture}
\newtheorem{prob}[prop]{Problem}
\def\beq{\begin{equation}}
\def\eeq{\end{equation}}
\def\PP{\mathbb{P}}
\def\CC{\mathbb{C}}
\def\lra{\longrightarrow}
\def\mapright#1{\,\smash{\mathop{\lra}\limits^{#1}}\,}
\def\cO{\mathcal{O}}
\def\fX{\mathfrak X}
\def\Pic{\mathrm{Pic}}
\def\red{\color{red}}
\def\black{\color{black}}
\title{All complete intersection varieties are Fano visitors}
\author{Young-Hoon Kiem, In-kyun Kim, Hwayoung Lee, and Kyoung-Seog Lee}
\address{Department of Mathematics and Research Institute
of Mathematics, Seoul National University, Seoul 151-747, Korea}
\email{kiem@math.snu.ac.kr}
\address{Department of Mathematics, Seoul National University, Seoul 151-747, Korea}
\email{soulcraw@gmail.com}
\address{Department of Mathematics, Seoul National University, Seoul 151-747, Korea}
\email{hlee014@snu.ac.kr}
\address{School of Mathematics, Korea Institute for Advanced Study, Seoul 130-722, Korea}
\email{kyoungseog02@gmail.com}
\thanks{Partially supported by NRF grant 2011-0027969.}
\begin{document}
\begin{abstract}
We prove that the derived category of a smooth complete intersection variety is equivalent to a full subcategory of the derived category of a smooth projective Fano variety. This enables us to define some new invariants of smooth projective varieties and raise many interesting questions. 
\end{abstract}
\maketitle

\section{Introduction}\label{s1}



In recent years, derived categories of projective varieties attracted tremendous interest among algebraic geometers as well as physicists. Fano varieties in particular have been most intensively studied because their derived categories (1) determine the varieties completely due to the reconstruction theorem of Bondal and Orlov and (2) have canonical semiorthogonal  decompositions by canonical exceptional collections of line bundles by the Kodaira vanishing theorem. Moreover, the derived categories of many Fano varieties of low dimension have been calculated quite explicitly.

Often the derived categories of Fano varieties are big enough to contain interesting subcategories. For example, the derived categories of hyperelliptic curves are full subcategories of the derived categories of intersections of two quadrics (cf. \cite{BO})
and the derived categories of some special cubic 4-folds contain the derived categories of $K3$ surfaces (cf. \cite{Kuz1}).
These results provide derived category theoretic explanations for the corresponding geometric results of \cite{DR, Reid} and \cite{BD}. Analysis of derived categories may tell us where to dig (or where not to dig) when we are searching for a specific type of varieties. 

In 2011, Bondal raised the following question (cf. \cite{BBF}).
\begin{ques}\label{q1.0} \emph{(Fano visitor problem)}\\
Let $Y$ be a smooth projective variety. Is there any smooth projective Fano variety $X$ together with a fully faithful embedding $D^b(Y)\to D^b(X)$? If such an $X$ exists, $Y$ is called a \emph{Fano visitor}  and $X$ is called a \emph{Fano host} of $Y$.  
\end{ques}

If the answer to this question is yes for all smooth projective varieties, namely if all smooth projective varieties are Fano visitors, then we can effectively reduce the study of the derived categories of all smooth projective varieties to the Fano case. Moreover we can define interesting new invariants of projective varieties $Y$ by considering for instance the minimal dimension of Fano hosts of $Y$ (cf. \S\ref{s5}).

Here is the current state of knowledge for Bondal's question: 
In \cite{BBF}, Bernardara, Bolognesi and Faenzi use homological projective duality to find a list of Fano visitors, including all smooth plane curves. In \cite{SeTh}, Segal and Thomas prove that for a general quintic 3-fold $Y$, there is a Fano 11-fold $X$ such that there is a fully faithful embedding $D^b(Y) \hookrightarrow D^b(X).$ 

The purpose of this paper is to prove the following.
\begin{theo}\label{t1.1} 
All complete intersection smooth projective varieties are Fano visitors.
\end{theo}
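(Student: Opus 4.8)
The plan is to realize the derived category of a complete intersection $Y = \{f_1 = \cdots = f_c = 0\} \subset \PP^n$ inside the derived category of a Fano variety by a two-step reduction. First I would pass from the complete intersection to an associated hypersurface-type object via a Cayley-type trick: the vanishing locus of a section of a direct sum of line bundles $\cE = \bigoplus_{i=1}^c \cO(d_i)$ on $\PP^n$ can be compared, at the level of derived categories, with a hypersurface in the total space of $\PP(\cE\dual)$ (the projectivization), or with a divisor in a suitable projective bundle. The Koszul/matrix-factorization formalism (in the spirit of Orlov's theorem on graded singularity categories, and its use by Segal--Thomas for the quintic) identifies $D^b(Y)$ with a component of the derived category of the total space of a line bundle over $\PP^n$ twisted by the equation $\sum w_i f_i$, and then with a component of the derived category of the projective bundle $Z = \PP(\cE) \to \PP^n$ cut out appropriately. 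Concretely, I expect $D^b(Y)$ to embed fully faithfully into $D^b(Z')$ for an explicit smooth projective $Z'$ built from $\PP^n$ and the $\cE$-data.

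The second step is to arrange that the ambient variety produced is Fano, or to further embed it into a Fano one. Here the key tool is the following kind of observation: if $D^b(Y) \hookrightarrow D^b(W)$ and $W$ itself sits inside a variety with enough positivity — e.g. $W$ is a divisor, or a zero locus of a section of an ample bundle, inside a variety $P$ with $-K_P$ sufficiently positive — then one can absorb the defect by increasing the ambient dimension: replacing $\PP^n$ by $\PP^N$ with $N \gg n$ and taking cones or further projective bundles makes the anticanonical class positive by adjunction, at the cost of raising the dimension of the Fano host. This is exactly the mechanism by which Segal--Thomas get a Fano $11$-fold hosting the quintic $3$-fold, and the point of Theorem~\ref{t1.1} is that this works uniformly for all multidegrees $(d_1,\dots,d_c)$. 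So the strategy is: (i) fully faithful embedding $D^b(Y) \hookrightarrow D^b(Z)$ for $Z$ an explicit projective bundle / divisor construction over projective space, via Koszul duality and a semiorthogonal decomposition argument; (ii) check that by enlarging the ambient projective space sufficiently, the relevant $Z$ (or a variety containing it with a compatible embedding of derived categories) can be taken Fano; (iii) combine.

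In more detail on step (i): write $T = \mathrm{Tot}(\cO_{\PP^n}(-d_1)\oplus\cdots\oplus\cO_{\PP^n}(-d_c))$, let $f = \sum_i p_i f_i$ be the regular function on $T$ with $p_i$ the tautological sections, and consider either the Landau--Ginzburg model $(T,f)$ or its compactification. Orlov's comparison between graded matrix factorizations of $f$ and the bounded derived category of the critical locus — together with the fact that the critical locus of $f$ (in the relevant sense) is $Y$ — gives $D^b(Y)$ as an admissible subcategory of $D^b$ of the projective cone or the relevant projective bundle. I would then use Orlov's projective bundle formula and blow-up formula, plus mutations, to transport this into $D^b$ of a genuinely smooth projective variety $X_0$ whose geometry is controlled by $n$, $c$, and $\sum d_i$.

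The main obstacle I anticipate is step (ii): making the construction Fano, uniformly in the multidegree. The naive output of the Cayley trick is not Fano — its canonical class picks up the twist $\sum(d_i)$, which for high-degree complete intersections is very negative in the wrong direction — and the whole content of the theorem over the earlier partial results is handling arbitrarily large $d_i$ and arbitrarily many equations $c$. The resolution should be to trade dimension for positivity: embed $\PP^n$ (or the projective bundle) into a higher-dimensional Fano, or realize the needed hypersurface as the zero locus of a section of an ample bundle on a Fano of large dimension, using that on $\PP^N$ one has $-K_{\PP^N} = \cO(N+1)$ with $N$ at our disposal. Verifying smoothness (via a Bertini-type genericity argument for the auxiliary equations) and the Fano inequality simultaneously, while keeping the fully faithful embedding intact under each mutation, is where the real work lies; I would expect to need an explicit bound on $\dim X$ in terms of $n$, $c$, and the $d_i$, and to prove genericity statements guaranteeing that the constructed $X$ is smooth and Fano.
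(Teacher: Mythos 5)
Your proposal follows essentially the same route as the paper: a Cayley-trick embedding $D^b(Y)\hookrightarrow D^b(X)$ for $X$ a hypersurface in $\PP E^\vee$ (the paper invokes Orlov's semiorthogonal decomposition for such hypersurface fibrations directly, rather than going through matrix factorizations), combined with exactly the "trade dimension for positivity" step you describe — re-embedding $Y$ in $\PP^{n+r}$ for $r\gg 0$ and taking $E=\cO(1)^{\oplus r}\oplus\bigoplus_i\cO(d_i)$ so that $X$ becomes Fano. Your anticipated difficulties largely evaporate in this explicit form: no Bertini-type genericity or mutations are needed, since the auxiliary equations are the linear forms cutting out $\PP^n\subset\PP^{n+r}$, the section is automatically regular, and smoothness of $X$ and ampleness of $-K_X$ are verified by a direct local computation and the Kleiman criterion on the Picard-rank-two variety $\PP E^\vee$.
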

Our construction is quite elementary and completely different from \cite{BBF, SeTh} even in the case of plane curves or quintic 3-folds. 
Our Fano host for a quintic 3-fold is only 5 dimensional. 
We expect that our construction will work for larger classes of varieties. 

A lot of interesting questions follow from Theorem \ref{t1.1}. For instance, we can ask for minimal dimensional choices of Fano hosts.
\begin{defi}\label{d1.2} 
The \emph{Fano dimension} of a smooth projective variety $Y$ is the minimum dimension of Fano hosts $X$ of $Y$.
We define the Fano dimension to be infinite if no Fano hosts exist.

The \emph{Fano number} of a smooth projective variety $Y$ is the number of deformation equivalence classes of (irreducible) Fano hosts $X$ of $Y$ of minimal dimension (i.e. $\dim X$ equals the Fano dimension of $Y$).
\end{defi}
\begin{ques}\label{q1.2}
What is the Fano dimension of a complete intersection variety? What is the Fano number of each of these varieties?
\end{ques}

We can also refine Bondal's question.
\begin{ques} \label{q1.3}
Let $Y$ be a smooth projective variety. 
Are there Fano hosts $X$ of $Y$ with prescribed Hodge numbers?
\end{ques}
For instance, when $Y$ is holomorphic symplectic, we may ask for a Fano host $X$ satisfying $h^{3,1}(X)=1$.
Another possible way to refine Bondal's question is to require $X$ to be of certain simple type.
\begin{ques} \label{q1.4}
Let $Y$ be a smooth projective variety. 
Is there a Fano host $X$ of $Y$, which is a hypersurface in a toric projective variety?
\end{ques}
Our construction in \S\ref{s4} gives $X$ as a hypersurface in a projective bundle over a projective space. Hence the answer to Question \ref{q1.4} is yes for all complete intersection smooth projective varieties. 

We can also generalize the notion of Fano number.
\begin{defi}\label{d1.5}
Let $Y$ be a smooth projective variety. 
Let $\kappa_Y(n)$ be the number of deformation equivalence classes of (irreducible)  Fano hosts $X$ of dimension $n$.
Let $\tau_Y(n)$ be the number of deformation equivalence classes of (irreducible) Fano hosts $X$ of dimension $n$, which is a hypersurface in a toric projective variety.
These define functions $\kappa_Y:\ZZ_{>0}\to \ZZ_{\ge 0}$ and $\tau_Y:\ZZ_{>0}\to \ZZ_{\ge 0}$, which we call the \emph{Fano function} and \emph{toric Fano function} respectively.  
\end{defi}\
Note that toric Fano varieties are well understood at least in low dimension.

It is obvious that the Fano function determines the variety $Y$ when $Y$ is $\PP^1$. 
It will be interesting to investigate how strong the  (toric) Fano function is as an invariant of the variety $Y$. For instance, we conjecture that the (toric) Fano function determines del Pezzo surfaces.

\medskip

\noindent \textbf{Acknowledgement}. We thank Atanas Iliev, Dmitri Orlov, Genki Ouchi and Richard Thomas for useful comments.

\medskip
\noindent\textbf{Notation}. In this paper, all varieties are defined over the complex number field $\CC$. For a vector bundle $F$ on $S$, the projectivization $\PP F:=\mathrm{Proj}\left( \mathrm{Sym}^\cdot F^{\vee}\right)$ of $F$ parameterizes one dimensional subspaces in fibers of $F$. For a variety $X$, $D^b(X)$ denotes the bounded derived category of coherent sheaves on $X$. The zero locus of a section $s:\cO_X\to F$ of a vector bundle $F$ over a scheme $X$ is the closed subscheme of $X$ whose ideal is the image of $s^\vee:F^\vee\to \cO_X$.

\bigskip

\section{Preliminaries}\label{s2}

In this section we collect basic definitions and facts that we will use.

\subsection{Semiorthogonal decomposition}\label{s2.1}

We review semiorthogonal decompositions of derived categories of coherent sheaves. Our basic reference is \cite{Kuz3}.

\begin{defi}\label{d2.1}
Let $\mathcal{T}$ be a triangulated category. A \emph{semiorthogonal decomposition} of $\mathcal{T}$ is a sequence of full triangulated subcategories $ \mathcal{A}_1, \cdots, \mathcal{A}_n $ satisfying the following properties:
\\
(1) $Hom_{\mathcal{T}}(a_i, a_j)=0$ for any $a_i \in \mathcal{A}_i, a_j \in \mathcal{A}_j$ with $i > j$; \\
(2) the smallest triangulated subcategory of $\mathcal{T}$ containing $ \mathcal{A}_1, \cdots, \mathcal{A}_n $ is $\mathcal{T}.$

We will write $\mathcal{T} = \langle \mathcal{A}_1, \cdots, \mathcal{A}_n \rangle$ to denote the semiorthogonal decomposition.
\end{defi}

The notion of semiorthogonal decomposition plays a key role in the study of derived categories of algebraic varieties and lots of semiorthogonal decompositions have been constructed. We recall several constructions of semiorthogonal decompositions that we will use later.

Orlov proved that derived categories of projective bundles have semiorthogonal decompositions. 

\begin{theo}\label{t2.2} \cite{Orlov}
Let $E$ be a vector bundle of rank $k+1$ on $X$ and $ \pi : X=\PP E^\vee \to Y $ be a projective bundle. Then 
$$ \Phi_i : D^b(Y) \to D^b(X), ~~~~~ \mathcal{F} \mapsto L\pi^*(\mathcal{F}) \otimes^L \mathcal{O}_X(i) $$ 
is a fully faithful functor for each $i \in \ZZ$ and we have the semiorthogonal decomposition
$$ D^b(X)=\langle \Phi_0(D^b(Y)),\Phi_1(D^b(Y)),\cdots,\Phi_k(D^b(Y)) \rangle. $$
\end{theo}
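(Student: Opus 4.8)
The plan is to check the three defining properties in turn, the only geometric input being the derived pushforward of line bundles along the projective bundle $\pi\colon X=\PP E^\vee\to Y$ of relative dimension $k$: one has $R\pi_*\mathcal{O}_X(j)=\mathrm{Sym}^jE$ for $j\ge 0$ (in particular $R\pi_*\mathcal{O}_X=\mathcal{O}_Y$) and $R\pi_*\mathcal{O}_X(j)=0$ for $-k\le j\le -1$, both of which follow from the fiberwise cohomology of $\mathcal{O}(j)$ on $\PP^k$ together with cohomology and base change. Granting this, full faithfulness of each $\Phi_i$ is formal: twisting by the line bundle $\mathcal{O}_X(-i)$ and then using the adjunction $(L\pi^*,R\pi_*)$ and the projection formula,
$$\Hom_X\!\big(L\pi^*\mathcal{F}\otimes\mathcal{O}_X(i),\,L\pi^*\mathcal{G}\otimes\mathcal{O}_X(i)\big)\;\cong\;\Hom_Y\!\big(\mathcal{F},\,\mathcal{G}\otimes R\pi_*\mathcal{O}_X\big)\;\cong\;\Hom_Y(\mathcal{F},\mathcal{G})$$
for $\mathcal{F},\mathcal{G}\in D^b(Y)$, and the same manipulation gives, for $i>j$,
$$\Hom_X\!\big(\Phi_i\mathcal{F},\Phi_j\mathcal{G}\big)\;\cong\;\Hom_Y\!\big(\mathcal{F},\,\mathcal{G}\otimes R\pi_*\mathcal{O}_X(j-i)\big)\;=\;0$$
since $-k\le j-i\le -1$, which is exactly the semiorthogonality in the asserted order.

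The substantive step is generation, for which I would use the relative Beilinson resolution of the diagonal. Let $Q$ be the rank $k$ tautological quotient bundle on $X$, the cokernel of $\mathcal{O}_X(-1)\hookrightarrow\pi^*E^\vee$. On $X\times_Y X$, with projections $p,q$ to the two factors, the composite $p^*\mathcal{O}_X(-1)\hookrightarrow(\pi\circ p)^*E^\vee=(\pi\circ q)^*E^\vee\twoheadrightarrow q^*Q$ is a global section of $p^*\mathcal{O}_X(1)\otimes q^*Q$ whose zero scheme is exactly the relative diagonal $\Delta$, of codimension $k$; hence the associated Koszul complex is a resolution
$$0\to p^*\mathcal{O}_X(-k)\otimes q^*\Lambda^kQ^\vee\to\cdots\to p^*\mathcal{O}_X(-1)\otimes q^*Q^\vee\to\mathcal{O}_{X\times_Y X}\to\mathcal{O}_\Delta\to 0.$$
Since the identity functor of $D^b(X)$ is the Fourier--Mukai transform with kernel $\mathcal{O}_\Delta$, every $\mathcal{A}\in D^b(X)$ satisfies $\mathcal{A}\cong Rp_*(q^*\mathcal{A}\otimes\mathcal{O}_\Delta)$; substituting the Koszul resolution and using the filtration exhibits $\mathcal{A}$ as obtained by finitely many distinguished triangles from
$$Rp_*\!\big(q^*\mathcal{A}\otimes q^*\Lambda^jQ^\vee\otimes p^*\mathcal{O}_X(-j)\big)\;\cong\;\mathcal{O}_X(-j)\otimes\pi^*R\pi_*\!\big(\mathcal{A}\otimes\Lambda^jQ^\vee\big)\;\in\;\Phi_{-j}(D^b(Y)),\qquad 0\le j\le k,$$
where I used the projection formula for $p$ and the flat base change $Rp_*q^*\cong\pi^*R\pi_*$. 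Thus $\Phi_{-k}(D^b(Y)),\dots,\Phi_0(D^b(Y))$ generate $D^b(X)$, and twisting by $\mathcal{O}_X(k)$ gives the same for $\Phi_0(D^b(Y)),\dots,\Phi_k(D^b(Y))$.

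I expect the generation step to be the main obstacle, and within it two points deserve care: first, verifying that the tautological section genuinely cuts out the relative diagonal with the expected codimension $k$ (fiberwise this is the classical Beilinson resolution on $\PP^k\times\PP^k$, but it must be globalized over $Y$), so that the Koszul complex is exact; and second, making the convolution argument precise, i.e.\ converting the stupid filtration on the Koszul complex into the successive distinguished triangles and commuting $Rp_*$ past the twists and pullbacks via the projection formula and base change. Once the pushforward formulas for $\mathcal{O}_X(j)$ with $-k\le j\le k$ are in hand, everything else is formal.
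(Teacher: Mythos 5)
This statement is quoted in the paper as a known theorem of Orlov (\cite{Orlov}) and is not proved there, so there is no internal argument to compare against; your proposal is a correct proof and is essentially Orlov's original one. The formal steps (full faithfulness and semiorthogonality via the adjunction $(L\pi^*,R\pi_*)$, the projection formula, and $R\pi_*\mathcal{O}_X(j)=0$ for $-k\le j\le -1$) are right, and the generation step via the Koszul resolution of the relative diagonal cut out by the canonical section of $p^*\mathcal{O}_X(1)\otimes q^*Q$, followed by convolution and a twist by $\mathcal{O}_X(k)$, is the standard and correct way to finish; the two points you flag for care are indeed the only ones requiring detail, and both are routine.
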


Using this semiorthogonal decomposition, Orlov obtained the blowup formula.

\begin{theo}\label{t2.3} \cite{Orlov}
Let $Y \subset S$ be a smooth subvariety which is a local complete intersection of codimension $c$ in $S$. Let $\pi : X \to S$ be the blowup along $Y$ and $j : D \to X$ be the exceptional divisor. Then
$$ L\pi^* : D^b(S) \to D^b(X), $$
$$ \Phi_i : D^b(Y) \to D^b(X), ~~~~~ \mathcal{F} \mapsto Rj_*(L\pi^*(\mathcal{F}) \otimes^L \mathcal{O}(i)) $$ 
are fully faithful functors for all $i \in \ZZ$ and we have the semiorthogonal decomposition
$$ D^b(X) = \langle L\pi^*(D^b(S)), \Phi_0(D^b(Y)),\Phi_1(D^b(Y)),\cdots,\Phi_{c-2}(D^b(Y)) \rangle .$$
\end{theo}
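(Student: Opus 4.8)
The plan is to reduce the statement to Orlov's projective bundle decomposition (Theorem~\ref{t2.2}) applied to the exceptional divisor, using only formal properties of the maps involved. Write $i\colon Y\hookrightarrow S$ for the inclusion and $p:=\pi|_D\colon D\to Y$ for the restriction of $\pi$; since $Y$ is a local complete intersection of codimension $c$, the exceptional divisor is the projective bundle $D=\PP N$ with $N=N_{Y/S}$ of rank $c$, so $p$ is a $\PP^{c-1}$-bundle. Let $\xi=\cO_D(1)$ be its relative hyperplane line bundle; the basic geometric fact is $\cO_X(-D)|_D\cong\xi$, so that the twist $\cO(i)$ in the statement is $\cO_D(i)\cong\cO_X(-iD)|_D$ and the ``$L\pi^*$'' in the formula for $\Phi_i$ is really $Lp^*$. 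From the Koszul sequence $0\to\cO_X(-D)\to\cO_X\to j_*\cO_D\to0$ and the projection formula one obtains the two identities used repeatedly below: $Lj^*Rj_*\cF\cong\cF\oplus(\cF\otimes\xi)[1]$ for $\cF\in D^b(D)$, and $j^!\cF\cong Lj^*\cF\otimes\xi^{-1}[-1]$ (using $\cO_D(D)=\cO_X(D)|_D\cong\xi^{-1}$ and $\omega_{D/X}=\cO_D(D)$). I shall also use $Lj^*L\pi^*\cong Lp^*Li^*$ (both equal $L(\pi j)^*=L(ip)^*$), the vanishing $Rp_*\xi^{m}=0$ for $-(c-1)\le m\le-1$ together with $Rp_*\cO_D\cong\cO_Y$ on the bundle $p$, and---the one non-formal ingredient---$R\pi_*\cO_X\cong\cO_S$.

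Full faithfulness is then immediate. For $L\pi^*$: the projection formula and $R\pi_*\cO_X\cong\cO_S$ give $R\pi_*L\pi^*\cG\cong\cG$, so $L\pi^*$ is fully faithful by adjunction. For $\Phi_i=Rj_*(Lp^*(-)\otimes\xi^i)$ with $0\le i\le c-2$: applying the adjunction $(Lj^*,Rj_*)$ and the splitting above,
\[
\mathrm{RHom}_X(\Phi_i\cF,\Phi_i\cG)\ \cong\ \mathrm{RHom}_D(Lp^*\cF,Lp^*\cG)\ \oplus\ \mathrm{RHom}_D(Lp^*\cF\otimes\xi,Lp^*\cG)[-1],
\]
and by the adjunction $(Lp^*,Rp_*)$ the two summands are $\mathrm{RHom}_Y(\cF,\cG\otimes Rp_*\cO_D)=\mathrm{RHom}_Y(\cF,\cG)$ and $\mathrm{RHom}_Y(\cF,\cG\otimes Rp_*\xi^{-1})[-1]=0$; this is essentially the computation that underlies Theorem~\ref{t2.2} itself.

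For semiorthogonality I must check that $\mathrm{Hom}$ from each later summand to each earlier one vanishes, i.e.\ $\mathrm{RHom}_X(\Phi_k\cF,\Phi_{k'}\cG)=0$ for $c-2\ge k>k'\ge0$ and $\mathrm{RHom}_X(\Phi_k\cF,L\pi^*\cG)=0$ for $0\le k\le c-2$. The first runs exactly as above: the two summands become $\mathrm{RHom}_Y(\cF,\cG\otimes Rp_*\xi^{k'-k})$ and $\mathrm{RHom}_Y(\cF,\cG\otimes Rp_*\xi^{k'-k-1})[-1]$, and for $k>k'$ both exponents lie in $[-(c-1),-1]$, so both vanish. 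For the second one uses the adjunction $(Rj_*,j^!)$ together with $j^!L\pi^*\cG\cong Lp^*Li^*\cG\otimes\xi^{-1}[-1]$, which gives $\mathrm{RHom}_X(\Phi_k\cF,L\pi^*\cG)\cong\mathrm{RHom}_Y(\cF,Li^*\cG\otimes Rp_*\xi^{-1-k})[-1]$, and this vanishes since $-1-k\in[-(c-1),-1]$.

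It remains to prove generation, which I expect to be the main obstacle. Each of the $c$ subcategories is the image of a fully faithful Fourier--Mukai functor between derived categories of smooth projective varieties, hence is admissible, so it suffices to show that any $\cF\in D^b(X)$ right-orthogonal to all of them is zero. Orthogonality to $L\pi^*D^b(S)$ forces $R\pi_*\cF=0$, hence---as $\pi$ restricts to an isomorphism over $S\setminus Y$---$\cF$ is supported on $D$; orthogonality to the $\Phi_i$ forces, by the same adjunction identities, $Rp_*(Lj^*\cF\otimes\xi^{m})=0$ for $-(c-1)\le m\le-1$, and decomposing $Lj^*\cF$ along the semiorthogonal decomposition of $D^b(D)$ in Theorem~\ref{t2.2} and applying $Rp_*(-\otimes\xi^{-(c-1)}),\dots,Rp_*(-\otimes\xi^{-1})$ in turn shows that all of its graded pieces but the $\xi^0$-one vanish, so $Lj^*\cF\cong Lp^*\mathcal G$ is pulled back from $Y$. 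One is then left to deduce $\cF=0$ from $R\pi_*\cF=0$ together with $Lj^*\cF\cong Lp^*\mathcal G$; this last implication---which unwinds how an object supported on $D$ is built, through the filtration of $\cO_{nD}$ with graded pieces $j_*\xi^{k}$ ($0\le k<n$), out of pushforwards from $D$, and matches that against the vanishing of $R\pi_*\cF$---is the technical core of Orlov's theorem and the step I expect to be the real obstacle. The cleanest route is the one of \cite{Orlov}: construct a resolution of the structure sheaf of the diagonal $\Delta\subset X\times X$ by exterior products of objects pulled back from $S$ and from $D$; this yields full faithfulness, semiorthogonality and generation simultaneously, is the most computation-heavy part, and is where the geometry of the blow-down---beyond $D$ being a projective bundle---genuinely enters.
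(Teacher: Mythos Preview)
The paper does not prove Theorem~\ref{t2.3}; it is quoted from \cite{Orlov} as a preliminary result and used later (e.g.\ in Proposition~\ref{p3.3}) without any argument given. So there is nothing in the paper to compare your proposal against.

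That said, your sketch follows the standard route of Orlov's original proof quite faithfully, and the overall architecture (reduce to the projective-bundle decomposition on $D$, plus the non-formal input $R\pi_*\cO_X\cong\cO_S$) is correct. One technical slip: the identity $Lj^*Rj_*\cF\cong\cF\oplus(\cF\otimes\xi)[1]$ is not true in general; what you actually have is the distinguished triangle
\[
\cF\otimes\xi[1]\ \longrightarrow\ Lj^*Rj_*\cF\ \longrightarrow\ \cF\ \longrightarrow\ \cF\otimes\xi[2],
\]
coming from $0\to\cO_X(-D)\to\cO_X\to j_*\cO_D\to0$. This does not damage your full faithfulness and semiorthogonality arguments, because in each case one of the two resulting $\mathrm{RHom}$ terms vanishes and the long exact sequence degenerates; but you should phrase it as a triangle, not a splitting. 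Your honest assessment of the generation step is accurate: that is indeed where the substance lies, and the diagonal-resolution argument you allude to is exactly how \cite{Orlov} handles it.
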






We will use the above results to prove that complete intersection Calabi-Yau varieties of codimension at most 2 are Fano visitors. 

For the proof of our main theorem (Theorem \ref{t1.1}), we will use the following theorem of Orlov (cf. \cite{Orlov2}) about hyperplane fibrations which was generalized by Ballard, Deliu, Favero, Isik and Katzarkov recently for higher degree hypersurface fibrations (cf. \cite[Theorem 3.2 \& Corollary 3.4]{BDFIK}).

\begin{theo}\label{t2.6}\cite[Proposition 2.10]{Orlov2}
Let $E$ be a vector bundle of rank $r\ge 2$ over a smooth projective variety $S$
and let $Y=s^{-1}(0)\subset S$ denote the zero locus of a regular section $s \in H^0(S,E)$ such that $ \dim Y = \dim S - \mathrm{rank}\, E$. 
Let $X=w^{-1}(0) \subset \PP E^\vee$ be the zero locus of the section $w\in H^0(\PP E^\vee, \cO_{\PP E^\vee}(1))$  
determined by $s$ under the natural isomorphism
$H^0(S,E)\cong H^0(\PP E^\vee, \cO_{\PP E^\vee}(1))$. 
Then we have the semiorthogonal decomposition
$$ D^b(X)= \langle q^*D^b(S), \cdots, q^*D^b(S) \otimes_{\cO_X} {\cO_X}(r-2), D^b(Y) \rangle .$$
\end{theo}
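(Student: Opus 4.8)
The statement is the projective bundle formula (Theorem~\ref{t2.2}) for $P:=\PP E^\vee\xrightarrow{\pi}S$, cut down to the divisor $X=\{w=0\}\in|\cO_P(1)|$: the plan is to show that the first $r-1$ of the $r$ exceptional blocks of $D^b(P)$ survive after restriction to $X$ while the last one is replaced by $D^b(Y)$ (for $r=2$ one has $X\cong\mathrm{Bl}_Y S$ and the statement is literally the blowup formula, Theorem~\ref{t2.3}). Write $\iota:X\hookrightarrow P$ and $q=\pi\circ\iota$. Restricting the Koszul sequence $0\to\cO_P(-1)\xrightarrow{\,w\,}\cO_P\to\cO_X\to0$ to $X$ and using the projection formula, for $F,G\in D^b(S)$ the object $\mathrm{RHom}_X(q^*F(i),q^*G(j))$ sits in a distinguished triangle together with $\mathrm{RHom}_S(F,G\otimes R\pi_*\cO_P(j-i))$ and $\mathrm{RHom}_S(F,G\otimes R\pi_*\cO_P(j-i-1))$; since $R\pi_*\cO_P(k)=0$ for $-(r-1)\le k\le-1$, this shows that $\Phi_i:=q^*(-)\otimes\cO_X(i):D^b(S)\to D^b(X)$ is fully faithful for $0\le i\le r-2$ and that $\mathrm{RHom}_X(\Phi_iF,\Phi_jG)=0$ for $r-2\ge i>j\ge0$. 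Hence $\cC:=\langle\Phi_0D^b(S),\dots,\Phi_{r-2}D^b(S)\rangle$ is a semiorthogonal, admissible subcategory of $D^b(X)$ --- the first $r-1$ blocks of the asserted decomposition --- and $D^b(X)=\langle\cC,{}^{\perp}\!\cC\rangle$.

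It then remains to identify ${}^{\perp}\!\cC$ with $D^b(Y)$. The relative Euler sequence $0\to\Omega_{P/S}(1)\to\pi^*E\xrightarrow{\,\mathrm{ev}\,}\cO_P(1)\to0$ together with the identity $\mathrm{ev}\circ\pi^*s=w$ shows that $\pi^*s|_X$ factors through a section $\sigma$ of $Q:=\Omega_{P/S}(1)|_X$, a bundle of rank $r-1$, whose zero scheme is $q^{-1}(Y)$; and since $s|_Y=0$ one has $q^{-1}(Y)=X\times_SY=\PP(E^\vee|_Y)$, a $\PP^{r-1}$-bundle $p:X_Y\to Y$ of codimension $r-1=\rank Q$ in the Cohen--Macaulay scheme $X$, so $\sigma$ is a regular section and $\cO_{X_Y}$ is resolved by $\bigwedge^{\bullet}Q^{\vee}$. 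Writing $j:X_Y\hookrightarrow X$, I would take
$$\Psi:=Rj_*\bigl(Lp^*(-)\otimes\cO_{X_Y}(r-2)\bigr):D^b(Y)\longrightarrow D^b(X).$$
Using the derived self-intersection formula for $j$ (with normal bundle $N=\Omega_{X_Y/Y}(1)$) and the Bott vanishing $R^{\bullet}p_*\bigl(\bigwedge^k\Omega_{X_Y/Y}(k)\bigr)=0$ for $1\le k\le r-1$ one gets $\mathrm{RHom}_X(\Psi F,\Psi G)=\mathrm{RHom}_Y(F,G)$, so $\Psi$ is fully faithful; and using $Lj^!=Lj^*(-)\otimes\det N[-(r-1)]$ with $\det N=p^*\det(E|_Y)\otimes\cO_{X_Y}(-1)$, together with $Rp_*\cO_{X_Y}(m)=0$ for $-(r-1)\le m\le-1$, one gets $\mathrm{RHom}_X(\Psi F,\Phi_iG)=0$ for $0\le i\le r-2$ --- this computation is what forces the twist to be $r-2$. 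Thus $\Psi$ embeds $D^b(Y)$ into ${}^{\perp}\!\cC$, and all blocks of the claimed decomposition are accounted for.

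The remaining point --- that $\cC$ and $\Psi(D^b(Y))$ generate $D^b(X)$, i.e. ${}^{\perp}\!\cC=\Psi(D^b(Y))$ --- is the heart of the argument, and I would establish it by dévissage. Since $X\subset P$ is a divisor in a smooth variety, every object of $D^b(X)$ is a direct summand of $L\iota^*$ of an object of $D^b(P)$, so by Theorem~\ref{t2.2} it suffices to put $\Phi_{r-1}D^b(S)$ inside $\langle\cC,\Psi(D^b(Y))\rangle$; equivalently, using $D^b(X)=\langle\cC,{}^{\perp}\!\cC\rangle$, to show that the ${}^{\perp}\!\cC$-component of each $q^*G(r-1)$ lies in $\Psi(D^b(Y))$. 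Two inputs drive this: first, restricting to $U:=X\setminus X_Y$, which is a $\PP^{r-2}$-bundle over $S\setminus Y$ on which $\cC|_U=D^b(U)$ by Theorem~\ref{t2.2}, shows ${}^{\perp}\!\cC$ is supported on $X_Y$, hence --- via $D^b(X_Y)=\langle Lp^*D^b(Y),\dots,Lp^*D^b(Y)(r-1)\rangle$ --- built from the pushforwards $Rj_*(Lp^*D^b(Y)(k))$, $0\le k\le r-1$; second, the Koszul complex of the (non-regular) section $q^*s\in H^0(X,q^*E)$, which is a complex of twisted pullbacks from $S$ with only the cohomology sheaves $\cO_{X_Y}$ and $\cO_{X_Y}(-1)[1]$: twisting it by $\cO_X(k)$ for $0\le k\le r-2$ produces objects of $\cC$ together with triangles that place each $Rj_*(Lp^*D^b(Y)(k))$, $0\le k\le r-2$, inside $\langle\cC,\Psi(D^b(Y))\rangle$, and a final bookkeeping of the top twist against the same complex absorbs $\Phi_{r-1}D^b(S)$ as well.

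This generation step is the main obstacle. Note that $\langle\Phi_0D^b(S),\dots,\Phi_{r-1}D^b(S)\rangle$ is \emph{not} a semiorthogonal decomposition --- $\Phi_{r-1}$ has nonzero $\mathrm{RHom}$'s into $\Phi_0$ --- so the excess block $\Phi_{r-1}D^b(S)$ must genuinely be rearranged, and showing that its ${}^{\perp}\!\cC$-part is precisely $\Psi(D^b(Y))$ (and that no further admissible summand of ${}^{\perp}\!\cC$ survives) requires a careful count of twists through the Koszul resolution of $\cO_{X_Y}$. In the generality stated, where $X$ and $Y$ may be singular, one must additionally be careful with the adjoint $Lj^!$, with the left adjoint of $Lq^*$, and with relative duality; for the application to Fano visitors all varieties involved are smooth, which removes these technicalities.
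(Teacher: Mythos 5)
The paper offers no proof of Theorem~\ref{t2.6}: it is imported verbatim from Orlov \cite[Proposition 2.10]{Orlov2} (with \cite{BDFIK} cited for the generalization to higher-degree hypersurface fibrations) and then used as a black box in Propositions~\ref{p3.4} and~\ref{p4.1}. So there is nothing in the paper to compare your argument against, and I can only assess it on its own terms. The two computational stages of your outline are correct and consistent with the paper's conventions: the triangle coming from $0\to\cO_P(-1)\xrightarrow{\,w\,}\cO_P\to\cO_X\to 0$ together with $R\pi_*\cO_P(k)=0$ for $-(r-1)\le k\le -1$ does give full faithfulness and semiorthogonality of $\Phi_0,\dots,\Phi_{r-2}$; the identification of $q^{-1}(Y)$ as the zero scheme of the regular section $\sigma$ of $\Omega_{P/S}(1)|_X$ (using that $w$ is the composite $\cO\xrightarrow{q^*s}q^*E\to\cO(1)$, as in \S\ref{s3.1}) is right, as are $N_{X_Y/X}=\Omega_{X_Y/Y}(1)$ and $\det N=p^*\det(E|_Y)\otimes\cO_{X_Y}(-1)$ (compare $K_{\PP E^\vee/S}=(q^*\det E)\otimes\cO(-r)$ in the proof of Lemma~\ref{l3.1}); and the vanishing $Rp_*\cO_{X_Y}(m)=0$ for $-(r-1)\le m\le -1$ does force the twist $r-2$ and yields $\Hom(\Psi F,\Phi_iG)=0$ in the direction needed for $D^b(Y)$ to sit as the rightmost block.

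The generation step, which you rightly call the heart of the matter, is however not yet a proof, and one assertion in it is false as stated: it is not true that every object of $D^b(X)$ is a direct summand of $L\iota^*$ of an object of $D^b(P)$ (the triangle $M(-X)[1]\to L\iota^*\iota_*M\to M$ need not split). What is true, and suffices because the components of a semiorthogonal decomposition are thick, is that $L\iota^*D^b(P)$ classically generates $D^b(X)$, e.g.\ because the restrictions $\cO_X(-i)$, $0\le i\le\dim X$, form a classical generator when $X$ is smooth (in the stated generality $X$ may be singular, which is a further point needing care). Likewise, the claim that ${}^{\perp}\cC$ is supported on $X_Y$ does not follow merely from $\cC|_U=D^b(U)$, since orthogonality does not restrict to open subsets for free; one has to detect support by testing against $\Phi_i$ applied to structure sheaves of points of $S\setminus Y$. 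Granting these repairs, your Koszul d\'evissage does close up: the Koszul complexes of $q^*s$ twisted by $\cO_X(k)$, $0\le k\le r-2$, lie in $\cC$ and link the objects $Rj_*(Lp^*(-)(m))$ for $m=-1,\dots,r-2$, which is a full set of $r$ consecutive twists for the $\PP^{r-1}$-bundle $X_Y\to Y$, so $Rj_*D^b(X_Y)\subset\langle\cC,\Psi D^b(Y)\rangle$ and intersecting with ${}^{\perp}\cC$ gives exactly $\Psi D^b(Y)$. In short: the outline is completable and follows the standard direct geometric route for Cayley's trick, but as written it has genuine holes; for the purposes of this paper the appropriate move is the one the authors make, namely to cite \cite{Orlov2} or \cite{BDFIK}.
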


\subsection{Fano varieties}\label{s2.2}
In this subsection, we recall some properties of Fano varieties.
\begin{defi}\label{d2.7}
A smooth projective variety $X$ is called \emph{Fano} if its anticanonical line bundle $K_X$ is ample, i.e. the dual $K_X^\vee$ of $K_X$ is ample.
\end{defi}

Fano varieties have many nice properties.

\begin{theo}\label{t2.8}\cite[Theorem 2.1]{FH}
For any positive integer $n$, there are only finitely many deformation equivalence classes of Fano varieties of dimension $n$.
\end{theo}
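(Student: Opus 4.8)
The plan is to show that every Fano $n$-fold can be realised, via a \emph{uniform} power of its anticanonical bundle, as a closed subvariety of one fixed projective space with bounded degree, and then to invoke the finiteness of the Hilbert scheme together with the fact that closed points in one connected component of a Hilbert scheme parametrise deformation equivalent varieties. Thus the argument splits into two parts: (i) a uniform boundedness statement for Fano $n$-folds, and (ii) a formal deduction of the finiteness of deformation classes from such boundedness.

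For part (i), the crucial geometric input is that Fano manifolds are rationally connected and are in fact covered by rational curves of small anticanonical degree. I would first recall that for a Fano $n$-fold $X$ and a general point $x\in X$ there is a rational curve $C$ through $x$ with $-K_X\cdot C\le n+1$: starting from any rational curve through $x$, Mori's bend-and-break (applied while keeping $x$ fixed, using that $-K_X$ is ample, so that the space of deformations of $C$ fixing $x$ has dimension at least $-K_X\cdot C-2$) degenerates it into a connected union of rational curves, one of which still passes through $x$ and has strictly smaller degree; iterating gives the bound. From such a covering family of rational curves of bounded degree, the standard intersection-theoretic estimates of Koll\'ar--Miyaoka--Mori yield a bound $(-K_X)^n\le c(n)$ with $c(n)$ depending only on $n$; in particular $(-K_X)^{n-1}\cdot K_X=-(-K_X)^n$ is also bounded in absolute value by $c(n)$.

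Given these two bounded intersection numbers, the effective version of Matsusaka's big theorem (Koll\'ar--Matsusaka) produces an integer $m_0=m_0(n,c(n))$, depending only on $n$, such that $-m_0K_X$ is very ample for every Fano $n$-fold $X$. Kawamata--Viehweg vanishing (note $-m_0K_X=K_X+(-(m_0+1)K_X)$ with $-(m_0+1)K_X$ ample) gives $H^i(X,-m_0K_X)=0$ for $i>0$, so the embedding $X\hookrightarrow \PP^{N}$ defined by the complete linear system $|-m_0K_X|$ satisfies $N=\chi(X,-m_0K_X)-1$ and has degree $(-m_0K_X)^n=m_0^n(-K_X)^n\le m_0^n c(n)=:d(n)$. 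Since a complete linear system embeds $X$ non-degenerately, the classical inequality $\deg X\ge \mathrm{codim}\,X+1$ forces $N\le d(n)+n-1$, so $N\le N(n)$ for a function of $n$ alone. Hence every Fano $n$-fold occurs, after re-embedding, as a closed point of the Hilbert scheme $H$ of closed subschemes of $\PP^{N(n)}$ of dimension $n$ and degree at most $d(n)$; by Mumford's uniform regularity bounds only finitely many Hilbert polynomials arise under these constraints, so $H$ is of finite type over $\CC$ and has finitely many connected components.

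Finally, for part (ii): the locus $U\subset H$ of points representing smooth Fano $n$-folds is open, hence itself of finite type, so $\pi_0(U)$ is finite; and any two closed points lying in the same connected component of $U$ can be joined by a chain of irreducible curves inside $U$, over the normalisation of each of which the universal family restricts to a smooth projective family with Fano fibres over a smooth connected base, so the two points represent deformation equivalent varieties. Since the embedding by $|-m_0K_X|$ is canonical up to the action of $\mathrm{PGL}_{N(n)+1}$, deformation equivalence of the embedded varieties coincides with that of the abstract ones, and therefore there are only finitely many deformation equivalence classes of Fano $n$-folds. The main obstacle is part (i), and within it the volume bound $(-K_X)^n\le c(n)$: this is precisely where the genuinely hard input enters --- rational connectedness of Fano manifolds and bend-and-break, which itself is proved by reduction to positive characteristic --- whereas everything downstream of the boundedness statement is essentially formal.
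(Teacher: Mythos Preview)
The paper does not actually prove this theorem: it is quoted as a background fact with a citation to \cite[Theorem 2.1]{FH}, and no argument is given. So there is no ``paper's own proof'' to compare against.

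Your outline is the standard Koll\'ar--Miyaoka--Mori route to boundedness of Fano $n$-folds, and it is essentially correct. The logical structure --- bend-and-break gives rational curves of $(-K_X)$-degree $\le n+1$ through a general point; from this one extracts a uniform bound on the anticanonical volume $(-K_X)^n$; then Koll\'ar--Matsusaka supplies a uniform $m_0$ making $-m_0K_X$ very ample; vanishing controls $h^0$; and one finishes inside a Hilbert scheme of finite type --- is exactly the argument one would find by chasing the reference. You also correctly flag that the hard content sits in the volume bound, which ultimately rests on rational connectedness and the characteristic-$p$ proof of bend-and-break.

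Two minor technical points worth tidying if you write this out in full. First, different Fano $n$-folds may have different values of $N=h^0(-m_0K_X)-1$; you only get $N\le N(n)$. To put everything into a single Hilbert scheme you should either compose with a linear inclusion $\PP^N\hookrightarrow\PP^{N(n)}$ (and then drop the non-degeneracy remark, which you do not need for finiteness) or simply observe that finitely many target projective spaces still yields finitely many components in total. Second, for the finiteness conclusion you only need the one-way implication ``same connected component of $U$ $\Rightarrow$ deformation equivalent as abstract varieties,'' which you establish via the universal family; the remark about canonicity of the embedding up to $\mathrm{PGL}$ is not needed for the stated theorem.
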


\begin{exam}\label{e2.9}
A two dimensional Fano variety is called a \emph{del Pezzo surface}. It is well known that there are 10 deformation equivalence classes of del Pezzo surfaces.
\end{exam}

\begin{theo}\label{t2.10}\cite[Theorem 2.2]{FH}
Fano varieties are rationally connected.
\end{theo}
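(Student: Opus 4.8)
The plan is to reprove Theorem \ref{t2.10} along the classical lines of Campana and of Koll\'ar--Miyaoka--Mori. One produces a rational curve through a general point of the Fano variety $X$, upgrades it to a \emph{free} rational curve, glues copies of free curves into a configuration whose general smoothing is \emph{very free} (i.e. has ample restricted tangent bundle), and finally invokes the principle that a smooth projective variety carrying a very free rational curve is rationally connected.

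\emph{Steps 1 and 2: from Fano-ness to a free rational curve.} Since $-K_X$ is ample, every irreducible curve $C\subset X$ satisfies $K_X\cdot C<0$, so $K_X$ is not nef. Spreading $X$ out over a finitely generated $\ZZ$-algebra and reducing modulo a large prime $p$, one composes a given map $\PP^1\to X_p$ with iterates of Frobenius to raise its degree while keeping the anticanonical degree controlled; this forces the space of morphisms $\PP^1\to X_p$ rigidified at a prescribed point to have positive dimension, and Mori's degeneration (bend-and-break) then yields, through that point, a rational curve of anticanonical degree at most $\dim X+1$. Lifting back to characteristic zero, $X$ carries a rational curve through every point. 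Now fix a very general point $x\in X$ and choose, among rational curves through $x$, a map $f\colon\PP^1\to X$ with $f(0)=x$ of minimal $(-K_X)$-degree; if $f^*T_X\cong\bigoplus_i\cO_{\PP^1}(a_i)$ had a negative summand, a deformation-theoretic dimension count would let us deform $f$ keeping $x$ fixed to a rational curve of strictly smaller degree through $x$, contradicting minimality and the genericity of $x$. Hence all $a_i\ge0$ and $f$ is free.

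\emph{Step 3: from free to very free (the main obstacle).} This is the technical heart. One forms a ``comb'' by attaching to a fixed free curve through $x$ many teeth, each a deformation of the free curve of Step 2 meeting the handle at a general point; since $-K_X$ is ample, every tooth contributes strictly positively to the anticanonical degree, so after attaching enough teeth the smoothing-of-combs lemma of Koll\'ar--Miyaoka--Mori produces a general smoothing $g\colon\PP^1\to X$ with $g(0)=x$ and $g^*T_X$ ample. Checking that the nodal configuration is unobstructed and that positivity genuinely propagates to the smoothing is exactly the delicate deformation argument, and it is where essentially all the real work lies.

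\emph{Step 4: conclusion.} If $g\colon\PP^1\to X$ is very free then the evaluation map
$$\mathrm{ev}\colon\mathrm{Mor}(\PP^1,X)\times\PP^1\times\PP^1\longrightarrow X\times X$$
is smooth, hence dominant, in a neighbourhood of the point determined by $g$, so two general points of $X$ are joined by a rational curve. Therefore every smooth Fano variety is rationally connected.
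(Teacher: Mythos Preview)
The paper does not prove Theorem~\ref{t2.10} at all; it is simply quoted from \cite{FH} (where it is itself recorded as the theorem of Campana and Koll\'ar--Miyaoka--Mori) and used only as background. So there is no ``paper's own proof'' to compare against, and anything you write is additional to what the paper does.

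Your outline is the classical KMM/Campana route and, as a sketch, is broadly correct. One point deserves tightening. In Step~2 you argue that if $f^*T_X$ had a negative summand then a dimension count would let you deform $f$, keeping $x$ fixed, to a rational curve of strictly smaller $(-K_X)$-degree through $x$. That is not how the standard argument goes, and I do not see how a negative summand by itself yields a degree drop: bend-and-break needs two fixed points, not one, to force a break. The usual justification is instead that for each irreducible component of $\mathrm{Mor}(\PP^1,X)$, the non-free members cannot dominate $X$ (dominance of the evaluation map forces generic smoothness, hence freeness of the general member), so the locus of points lying on some non-free rational curve is a countable union of proper closed subsets; a very general $x$ avoids it, and then every rational curve through $x$ is already free. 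With that correction, Steps~1--4 assemble into the standard proof. Step~3 is, as you say, where the real work is; your description of the comb-smoothing mechanism is accurate but of course only a pointer to the KMM lemma rather than a proof.
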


\begin{theo}\label{t2.12}\cite[Theorem 2.3]{FH}
The Mori cone of a Fano variety is a rational polyhedral cone generated by classes of rational curves.
\end{theo}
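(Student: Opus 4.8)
The plan is to deduce this from Mori's Cone Theorem together with Kleiman's ampleness criterion; the Fano hypothesis is exactly what makes the general cone theorem collapse to the clean statement asserted here. First I would fix the framework: set $n=\dim X$, let $N_1(X)_\RR$ be the finite-dimensional real vector space of $1$-cycles modulo numerical equivalence with integral lattice $N_1(X)_\ZZ$, and let $\overline{NE}(X)\subset N_1(X)_\RR$ be the closed convex cone generated by classes of irreducible curves (the Mori cone). Kleiman's criterion says a divisor $D$ is ample precisely when $D\cdot z>0$ for every nonzero $z\in\overline{NE}(X)$; applied to the ample divisor $-K_X$ this gives $-K_X\cdot z>0$ for all $z\in\overline{NE}(X)\setminus\{0\}$. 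In particular $\overline{NE}(X)$ is strongly convex, and the ``non-negative part'' $\overline{NE}(X)_{K_X\ge 0}:=\overline{NE}(X)\cap\{z:K_X\cdot z\ge 0\}$ equals $\{0\}$.

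Next I would invoke the Cone Theorem in its refined form: there is a countable family of rational curves $\{C_j\}$ with $0<-K_X\cdot C_j\le n+1$ such that, for every ample divisor $H$ and every $\varepsilon>0$,
$$\overline{NE}(X)=\overline{NE}(X)_{K_X+\varepsilon H\ge 0}+\sum_{j\in J_\varepsilon}\RR_{\ge 0}[C_j]$$
for some \emph{finite} index set $J_\varepsilon$. Taking $H=-K_X$ and $\varepsilon=\tfrac12$ turns $K_X+\varepsilon H$ into $\tfrac12 K_X$, whose non-negative part on the cone collapses to $\{0\}$ again, so one obtains $\overline{NE}(X)=\sum_{j\in J_{1/2}}\RR_{\ge 0}[C_j]$. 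This exhibits the Mori cone as a finitely generated, hence rational polyhedral (and in particular closed), cone generated by classes of rational curves, which is exactly the assertion.

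It remains to recall why the Cone Theorem holds, since that is where all the substance lies. The driving mechanism is Mori's \emph{bend-and-break}: starting from an irreducible curve $C$ with $-K_X\cdot C>0$, one reduces $X$ modulo a large prime, precomposes the normalization $\PP^1\to C\hookrightarrow X$ with a high power of Frobenius to inflate its degree, and thereby forces the space of maps $\PP^1\to X$ through a fixed point to have positive dimension; such a one-parameter family of maps cannot be proper, and analyzing its degeneration produces a rational curve, the bound $-K_X\cdot C'\le n+1$ coming from iterating the breaking. Lifting back to characteristic zero yields rational curves of bounded anticanonical degree adapted to any prescribed class. Finiteness of $J_\varepsilon$ holds because the classes $[C_j]$ with $-K_X\cdot C_j\le n+1$ lie in a bounded region of the lattice $N_1(X)_\ZZ$ --- the slice $\{z\in\overline{NE}(X):-K_X\cdot z\le n+1\}$ of the salient cone $\overline{NE}(X)$ is compact --- hence form a finite set; and the statement that these rays together with $\overline{NE}(X)_{K_X+\varepsilon H\ge 0}$ span everything is a separating-hyperplane argument: a divisor class non-negative on all the $[C_j]$ but negative somewhere on $\overline{NE}(X)$, perturbed slightly by $-K_X$, would be negative on some irreducible curve, which bend-and-break then converts into a short rational curve already on the list, a contradiction.

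I expect the main obstacle to be precisely bend-and-break and the reduction to positive characteristic: this is the deep geometric input, and there is no shortcut around it. By contrast the Fano hypothesis itself enters only through the elementary observation that $-K_X$ is ample and hence, via Kleiman, strictly positive on $\overline{NE}(X)\setminus\{0\}$, which is what annihilates $\overline{NE}(X)_{K_X\ge 0}$ and upgrades the general (merely $K_X$-negative) cone theorem to the rational polyhedral statement asserted here.
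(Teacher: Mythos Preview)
The paper does not prove this theorem at all: it is stated as a background fact with a citation to \cite[Theorem 2.3]{FH} and is never used elsewhere in the paper. Your proposal, by contrast, supplies the standard argument (Cone Theorem plus Kleiman's criterion, with the Fano hypothesis killing the $K_X\ge 0$ part), which is correct in outline and is exactly how one deduces the statement from Mori theory.

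One small slip in your bend-and-break paragraph: you write ``the normalization $\PP^1\to C$'', but the normalization of an arbitrary irreducible curve $C$ with $-K_X\cdot C>0$ is a smooth curve $\tilde C$ of possibly positive genus, not $\PP^1$. The Frobenius precomposition is applied to $\tilde C\to X$; it is only \emph{after} bend-and-break that a rational curve emerges. This does not affect the rest of your argument, but it is worth stating correctly.
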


Finally we recall the definition of Fano visitor.
\begin{defi}\label{d2.12}\cite[Definition 2.11]{BBF}
A smooth projective variety $Y$ is a \emph{Fano visitor} if there exists a Fano variety $X$ and a fully faithful functor $D^b(Y) \to D^b(X)$ such that $D^b(X)=\langle ^{\perp}D^b(Y), D^b(Y) \rangle.$ 

Such an $X$ is called a \emph{Fano host} of $Y$.
\end{defi}

\subsection{Ample and nef vector bundles}\label{s2.3}

We recall the notions of ample and nef vector bundles.

\begin{defi}\label{d2.13}
A vector bundle $E$ on $X$ is \emph{ample (resp. nef)} if the Serre line bundle $\mathcal{O}_{\PP E^\vee}(1)$ is an ample (resp. nef) line bundle on $\PP E^\vee$.  
\end{defi}

We will use the following property of ample (resp. nef) bundles.

\begin{theo}\label{t2.14}\cite[Proposition 6.1.13 \& Theorem 6.2.12]{Laz2}
Let $X$ be a projective variety. Then direct sums and extensions of ample (resp. nef) bundles are ample (resp. nef).
\end{theo}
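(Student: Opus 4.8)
The plan is to reduce everything to two standard intrinsic criteria for positivity of vector bundles. First, $E$ is nef if and only if its pullback under every morphism from a smooth projective curve to $X$ is nef, and a bundle on a smooth projective curve is nef if and only if every quotient line bundle of it has non-negative degree. Second (the Serre-type cohomological criterion), $E$ is ample if and only if for every coherent sheaf $\cF$ on $X$ one has $H^i(X,\cF\otimes\Symm^m E)=0$ for all $i>0$ and all $m\gg0$. Since a direct sum fits into the exact sequence $0\to E_1\to E_1\oplus E_2\to E_2\to0$, it is enough in every case to treat an extension
\[
0\longrightarrow E'\longrightarrow E\longrightarrow E''\longrightarrow0
\]
with $E'$ and $E''$ ample (resp.\ nef).

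For the nef statement I would restrict the extension to an arbitrary morphism $f\colon C\to X$ from a smooth projective curve, where it remains an extension of nef bundles, and then check that an extension $0\to F'\to F\to F''\to0$ of nef bundles on $C$ is nef, i.e.\ that every quotient line bundle $L$ of $F$ satisfies $\deg L\ge0$. Consider the composite $F'\to F\to L$. If it is zero, then $F\to L$ factors through $F''$, so $L$ is a quotient line bundle of the nef bundle $F''$ and $\deg L\ge0$. If it is nonzero, its image is a line bundle of the form $L(-D)$ with $D\ge0$ effective, and $L(-D)$ is then a quotient line bundle of the nef bundle $F'$, so $\deg L=\deg L(-D)+\deg D\ge0$. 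Hence $F$ is nef, and therefore $E$ is nef on $X$.

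For the ample statement I would use the cohomological criterion. The sub-bundle $E'\subset E$ induces on $\Symm^m E$ the filtration whose $j$-th term is the image of the multiplication map $\Symm^j E'\otimes\Symm^{m-j}E\to\Symm^m E$, with successive quotients $\Symm^a E'\otimes\Symm^b E''$ for $a+b=m$. Tensoring by $\cF$ and using the long exact cohomology sequences, the desired vanishing is reduced to
\[
H^i\bigl(X,\ \cF\otimes\Symm^a E'\otimes\Symm^b E''\bigr)=0\qquad(i>0,\ a+b=m,\ m\gg0),
\]
and the point is to make this hold uniformly in the partition $a+b=m$. For $a\ge m/2$ I would invoke a Fujita-type vanishing theorem applied to the ample bundle $E'$: for fixed coherent $\cF$ there is an $m_0$ such that $H^i(X,\cF\otimes\Symm^a E'\otimes\cG)=0$ for all $i>0$, all $a\ge m_0$, and all nef bundles $\cG$ — here taking $\cG=\Symm^b E''$, which is nef because $E''$ is ample (hence nef) and symmetric powers of nef bundles are nef. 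For $b\ge m/2$ one argues symmetrically with the roles of $E'$ and $E''$ exchanged. Thus for $m\ge 2m_0$ all the graded pieces above have vanishing higher cohomology, hence so does $\cF\otimes\Symm^m E$, and the criterion shows that $E$ is ample. (Alternatively one avoids Fujita vanishing by the more hands-on observation that, for $E'$ ample, every $\Symm^a E'$ is, up to tensoring with one of finitely many fixed bundles, a quotient of a direct sum of copies of a fixed ample line bundle raised to a power growing linearly in $a$, and likewise for $E''$; this again yields the uniform vanishing.)

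The only genuinely non-formal step is this uniformity in the ample case: the Serre-type criterion furnishes vanishing only for large symmetric powers with the coefficient sheaf held fixed, whereas in the filtration of $\Symm^m E$ the two factors $\Symm^a E'$ and $\Symm^b E''$ may be arbitrarily unbalanced, so one must control $\Symm^a E'\otimes\Symm^b E''$ uniformly over all splittings — which is exactly what a Fujita-type vanishing (uniform over nef twists) provides. The remaining ingredients — the cohomological and curve criteria, the behaviour of quotients and of symmetric powers of nef bundles, and the filtration of $\Symm^m E$ attached to a sub-bundle — are the standard formal toolkit; care is only needed to establish these basic properties in a logically consistent order (for instance, quotients and symmetric powers before sums and extensions).
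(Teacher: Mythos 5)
The paper offers no proof of this statement: it is quoted directly from [Laz2] (Proposition 6.1.13 and Theorem 6.2.12), where the ample case goes back to Hartshorne's cohomological arguments and the nef case to the reduction to curves. Your proposal reconstructs essentially that classical route — reduction of direct sums to (split) extensions, the filtration of $\Symm^m E$ with graded pieces $\Symm^a E'\otimes\Symm^b E''$ combined with the cohomological amplitude criterion, and a degree criterion on curves for nefness — and in outline it is correct; you also correctly identify the uniformity over unbalanced splittings $a+b=m$ as the only genuinely non-formal point.

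Two details deserve care. First, your curve-level criterion is false as an equivalence: on a smooth projective curve, a bundle all of whose quotient line bundles have non-negative degree need not be nef (take a stable rank-two bundle of degree $-1$ on a curve of genus at least $2$: stability forces every quotient line bundle to have degree $\ge 0$, yet the determinant has negative degree, so the bundle cannot be nef). The correct criterion uses all quotient bundles, or quotient line bundles after pullback along finite covers of the curve. Your argument survives untouched because you quantify over \emph{all} morphisms $f\colon C\to X$ from smooth projective curves — a class closed under precomposition with finite covers — and nefness of $E$ is equivalent to $\deg L\ge 0$ for every quotient line bundle $L$ of $f^*E$ over all such $f$; but the sentence ``hence $F$ is nef'' should be replaced by this global statement rather than a per-curve equivalence. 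Second, the Fujita-type vanishing you invoke, uniform over twists by arbitrary nef \emph{vector} bundles $\cG$, is true — the standard induction on dimension proving Fujita's theorem goes through verbatim, since restrictions of nef bundles to divisors are nef and Serre vanishing for each fixed twist identifies the stabilized cohomology groups as zero — but it is not ``off the shelf'' in the form you state it, so it should be proved or cited explicitly (your alternative, Hartshorne-style argument via twisted global generation and a resolution of bounded length is the historical way around it). Finally, note that ``symmetric powers of nef bundles are nef,'' which you use to see that $\cG=\Symm^b E''$ is nef, is itself a nontrivial characteristic-zero fact from the same circle of ideas; its usual proof (quotient of a tensor power of nef bundles) does not use extension-closure, so there is no circularity, but it too should be flagged rather than absorbed into the ``formal toolkit.''
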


Let us recall the well known ampleness criterion of Kleiman's.
\begin{theo}\cite[Theorem 1.27]{Deb} Let $X$ be a projective variety and $D$ be a Cartier divisor. Then $D$ is ample if and only if $C \cdot D > 0$ for all nonzero $C \in \overline{NE}(X)$.
\end{theo}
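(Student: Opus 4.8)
The statement is Kleiman's ampleness criterion, and the plan is to derive it from the Nakai--Moishezon criterion together with a compactness argument on the cone of curves. Throughout, $N_1(X)_\RR$ denotes the finite-dimensional real vector space of $1$-cycles modulo numerical equivalence, $\overline{NE}(X)\subseteq N_1(X)_\RR$ is the closed convex cone generated by classes of irreducible curves, and intersection against a Cartier divisor $D$ defines a linear functional $\gamma\mapsto D\cdot\gamma$ on it. I first record the trivial half of the corresponding ``nef'' statement: if $N\cdot C\ge 0$ for every irreducible curve $C$, then $N\cdot\gamma\ge 0$ for every $\gamma\in\overline{NE}(X)$, since nonnegative linear combinations and limits preserve this inequality. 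The two substantive inputs I would use are (i) the Nakai--Moishezon criterion, namely that $D$ is ample if and only if $D^{\dim V}\cdot V>0$ for every irreducible closed subvariety $V\subseteq X$; and (ii) the fact that $\overline{NE}(X)$ is strongly convex and that every ample class is strictly positive on $\overline{NE}(X)\setminus\{0\}$. I would prove (ii) by replacing $H$ by a very ample multiple, embedding $X\hookrightarrow\PP^N$ so that $H\cdot C=\deg C\ge 1$ for every irreducible curve $C$, and invoking boundedness of the Chow scheme (for each $d$, irreducible curves of degree $d$ form finitely many algebraic families and hence finitely many numerical classes); this forces the slice $K_H:=\{\gamma\in\overline{NE}(X):H\cdot\gamma=1\}$ to be compact, and in particular $\gamma\ne 0$ implies $H\cdot\gamma>0$.

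For necessity, suppose $D$ is ample and fix any ample divisor $H$. By Serre's theorem, $nD-H$ is globally generated, hence nef, for $n\gg 0$; therefore $(nD-H)\cdot\gamma\ge 0$ on $\overline{NE}(X)$, so $D\cdot\gamma\ge\tfrac1n H\cdot\gamma$ there, and by input (ii) this is $>0$ for every nonzero $\gamma\in\overline{NE}(X)$.

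For sufficiency, suppose $D\cdot\gamma>0$ for all $\gamma\in\overline{NE}(X)\setminus\{0\}$ and fix an ample divisor $H$. By input (ii) the slice $K_H$ is compact, so the continuous function $\gamma\mapsto D\cdot\gamma$ attains a positive minimum $\epsilon$ on $K_H$, whence $D\cdot\gamma\ge\epsilon\,H\cdot\gamma$ for all $\gamma\in\overline{NE}(X)$. Picking a rational $\epsilon'=p/q$ with $0<\epsilon'<\epsilon$, we get $(qD-pH)\cdot\gamma\ge q(\epsilon-\epsilon')\,H\cdot\gamma\ge 0$ on $\overline{NE}(X)$, so $qD-pH$ is nef and $qD=pH+(qD-pH)$ exhibits $qD$ as the sum of an ample divisor and a nef divisor. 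It remains to show that an ample-plus-nef divisor is ample, and here I would invoke Nakai--Moishezon: for $A$ ample, $B$ nef, and $V\subseteq X$ irreducible of dimension $k\ge 1$, the binomial expansion of $(A+B)^k\cdot V$ has leading term $A^k\cdot V>0$ by Nakai--Moishezon for $A$, while each remaining term $A^iB^{k-i}\cdot V$ with $i<k$ is nonnegative because $A^i\cdot V$ is represented by an effective cycle of dimension $k-i\ge 1$ (obtained by intersecting $V$ with general members of a very ample multiple of $A$), on whose irreducible components the nef class $B$ has nonnegative top self-intersection. Hence $(A+B)^k\cdot V>0$ for all $V$, so $qD$, and therefore $D$, is ample.

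The main obstacle is input (ii): the pointedness and ``compact slice'' property of $\overline{NE}(X)$, together with strict positivity of ample classes on it, is the geometric heart of the theorem and rests on boundedness of families of curves of bounded degree. A secondary point needing its own short argument --- a Nakai--Moishezon-style perturbation by a small ample class, run by induction on dimension --- is the fact, used in the last step, that a nef divisor has nonnegative top self-intersection on every irreducible subvariety. Everything else is point-set topology on a finite-dimensional cone combined with formal intersection theory.
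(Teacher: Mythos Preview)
The paper does not prove this statement at all: it is quoted verbatim from Debarre's book as a black box (``Let us recall the well known ampleness criterion of Kleiman's'') and then only its toric specialization is actually used. So there is no proof in the paper to compare your argument against.

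On its own merits your outline is the standard textbook proof, and the overall architecture --- necessity via $nD-H$ nef, sufficiency via compactness of the slice $K_H$, writing $qD$ as ample plus nef, and then Nakai--Moishezon on the binomial expansion --- is correct. One point deserves tightening: your justification of input (ii) via boundedness of the Chow scheme does not quite close. Knowing that for each fixed $d$ there are only finitely many numerical classes of curves of degree $d$ does not, by itself, bound the set $\{[C]/\deg C : C\ \text{irreducible}\}$ inside the hyperplane $H\cdot\gamma=1$ as $d\to\infty$, so compactness of $K_H$ does not follow from that statement alone. The clean non-circular argument is: for any Cartier divisor $D$ there exists $n$ with $nH\pm D$ ample (Serre), hence nonnegative on $\overline{NE}(X)$, so $|D\cdot\gamma|\le n\,(H\cdot\gamma)$ there; running this over a basis of $N^1(X)_\RR$ bounds the slice $K_H$ in every coordinate and simultaneously shows $\overline{NE}(X)$ contains no line. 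With that substitution your proof goes through.
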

For line bundles on toric varieties, the criterion is much simpler to check.
\begin{theo} \label{t2.15} \cite[Theorem 6.3.13]{CLS} (Toric Kleiman Criterion)\\ 
Let $D$ be a Cartier divisor on a complete toric variety $X_{\Sigma}$. Then $D$ is ample if and only if $D \cdot C > 0$ for all torus-invariant irreducible curves $C \subset X_{\Sigma}$.
\end{theo}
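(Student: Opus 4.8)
The plan is to prove the two implications separately. The forward implication is essentially formal: if $D$ is ample then some positive multiple $kD$ is very ample by Serre's theorem, hence defines a closed embedding $X_\Sigma\hookrightarrow\PP^N$ (in particular $X_\Sigma$ is projective), and then $kD\cdot C=\deg_C\cO_C(kD)>0$ for \emph{every} irreducible curve $C\subset X_\Sigma$, torus-invariant or not; dividing by $k$ gives $D\cdot C>0$.

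The content is in the converse, which I would treat combinatorially rather than by quoting the general Kleiman criterion recalled above --- note that $X_\Sigma$ is only assumed \emph{complete}, so projectivity is not available a priori and must be produced by the argument. First, since the torus-invariant prime divisors $D_\rho$ ($\rho\in\Sigma(1)$) generate the class group, I may replace $D$ by a linearly equivalent torus-invariant divisor $D=\sum_\rho a_\rho D_\rho$. Being Cartier, $D$ is encoded by its support function $\varphi_D\colon N_\RR\to\RR$, which restricts to a linear function $\langle m_\sigma,-\rangle$ on each maximal cone $\sigma\in\Sigma(n)$ (with $m_\sigma\in M$) and satisfies $\varphi_D(u_\rho)=-a_\rho$ on the primitive ray generators $u_\rho$.

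The key input is the wall-crossing intersection formula: for a wall $\tau=\sigma\cap\sigma'\in\Sigma(n-1)$ with associated complete torus-invariant curve $V(\tau)$, the intersection number $D\cdot V(\tau)$ equals, up to a positive lattice multiplicity, the ``jump'' $\langle m_\sigma-m_{\sigma'},u\rangle$ of $\varphi_D$ as one crosses $\tau$ along a vector $u$ pointing out of $\sigma$ into $\sigma'$. Granting this, the hypothesis ``$D\cdot C>0$ for every torus-invariant irreducible curve $C$'' becomes exactly the statement that $\varphi_D$ bends the correct way across every wall, and a short convexity argument propagating this wall-by-wall condition across $\Sigma$ upgrades it to: $\varphi_D$ is strictly convex. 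Finally I would convert strict convexity into ampleness via the polytope picture: the set $P_D=\{\,m\in M_\RR : \langle m,u_\rho\rangle\ge -a_\rho \text{ for all }\rho\in\Sigma(1)\,\}$ is then a full-dimensional lattice polytope whose normal fan is precisely $\Sigma$, so $X_\Sigma\cong X_{P_D}$ is the projective toric variety of $P_D$ and $\cO_{X_\Sigma}(kD)$ is very ample for $k\gg0$; hence $D$ is ample.

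The main obstacle is the middle step: establishing the wall-crossing intersection formula and the resulting dictionary ``$D\cdot V(\tau)>0$ for all $\tau\in\Sigma(n-1)$ $\iff$ $\varphi_D$ strictly convex'', and in particular checking that a condition imposed only at the codimension-one cones really forces \emph{global} strict convexity of $\varphi_D$ over all of $N_\RR$. A secondary point worth care is the last step, where one must verify that strict convexity makes $\Sigma$ equal to the normal fan of $P_D$ (so that the abstract complete toric variety $X_\Sigma$ is identified with the projective one built from $P_D$); this is what actually delivers projectivity, which the hypotheses did not assume.
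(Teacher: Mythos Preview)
The paper does not actually prove this statement: Theorem~\ref{t2.15} is quoted verbatim from \cite[Theorem 6.3.13]{CLS} in the preliminaries section, with no proof given. So there is no ``paper's own proof'' to compare against; the authors simply invoke the result as a black box (and in fact, in \S\ref{s4.2} they also note that the same conclusion follows from the Picard number of $\PP F^\vee$ being $2$, so even this citation is not load-bearing).

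That said, your sketch is essentially the standard argument from \cite{CLS} and is correct in outline. A couple of small comments. First, in the wall-crossing formula you invoke, the ``positive lattice multiplicity'' is in fact not needed if you set things up as in \cite{CLS}: writing the wall relation $\sum b_\rho u_\rho = 0$ with the two new rays normalized to have coefficient $1$, one has $D\cdot V(\tau) = \varphi_D(u_{\rho'}) - \langle m_\sigma, u_{\rho'}\rangle$ exactly, and its positivity is literally the strict convexity inequality at $\tau$. Second, your worry about the passage from ``strictly convex at every wall'' to ``globally strictly convex'' is well placed but resolvable: for a continuous function that is linear on each maximal cone of a \emph{complete} fan, strict convexity is a local condition on codimension-one faces, since any two maximal cones can be connected by a chain of walls and the inequalities compose. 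Finally, your last step is right in spirit but slightly overstated: strict convexity of $\varphi_D$ gives that $P_D$ is full-dimensional with normal fan $\Sigma$, whence $\cO(D)$ is ample; you do not need to first identify $X_\Sigma$ with a projective toric variety and then argue, since ampleness of $\cO(D)$ already yields projectivity.
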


\bigskip

\section{Complete intersection Calabi-Yau varieties}\label{s3}

In this section we prove that every complete intersection Calabi-Yau variety is a Fano visitor. 

\subsection{Construction}\label{s3.1}


\def\Sym{\mathrm{Sym} }
\def\Pic{\mathrm{Pic} }
\def\txi{\tilde{\xi} }



We assume that $E$ is a vector bundle of rank $r \geq 2$ over a smooth projective $S$ and that we have a section $s\in H^0(S,E)$ that defines a complete intersection nonsingular Calabi-Yau subvariety $Y=s^{-1}(0)$ of codimension $r$ in $S$. By the isomorphism 
$$H^0(S,E)\cong H^0(\PP E^\vee, \cO_{\PP E^\vee}(1)),$$
we have a section $w$ of $\cO_{\PP E^\vee}(1)$ whose zero locus $X=w^{-1}(0)$ is a hypersurface in $\PP E^\vee$.\footnote{This construction has been known and used before. Richard Thomas kindly informed us that it appeared in \cite{IM} as Cayley's trick. Atanas Iliev told us that he learned it from \cite{Cox, Nagel}.  Later we also found the same construction in \cite[\S2]{Orlov2}.}
It is straightforward to check that $X$ is smooth by using local coordinates.
More explicitly, $w$ is the composition
$$\cO_{\PP E^\vee}\mapright{q^*s} q^*E\lra \cO_{\PP E^\vee}(1)$$
of $q^*s$ and the dual of the universal family $\cO_{\PP E^\vee}(-1)\to q^*E^\vee$ over $\PP E^\vee$. 
Here $q:\PP E^\vee\to S$ denotes the projective bundle map.
Note that $$\dim X=\dim S+r-2 \and \dim Y=\dim S-r.$$

\begin{lemm}\label{l3.1}
$X$ is Fano if $(K_S\otimes \det E)^{-1}$ is nef and $E$ is ample.
\end{lemm}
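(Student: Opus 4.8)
The plan is to compute the anticanonical bundle $K_X^{-1}$ explicitly as a line bundle on $X$ and then read off its ampleness directly from the two hypotheses. Throughout write $q\colon\PP E^\vee\to S$ for the projective bundle map and, by abuse of notation, also for its restriction $X\to S$; set $\cO_X(1):=\cO_{\PP E^\vee}(1)|_X$. Since $X$ is smooth (as noted just before the lemma) and projective, it suffices to prove that $K_X^{-1}$ is ample.

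First I would compute $K_{\PP E^\vee}$ using the relative Euler sequence. With the paper's convention $\PP E^\vee=\mathrm{Proj}(\mathrm{Sym}^{\cdot}E)$, the tautological sub-line-bundle $\cO_{\PP E^\vee}(-1)\hookrightarrow q^*E^\vee$ (whose dual is the surjection $q^*E\twoheadrightarrow\cO_{\PP E^\vee}(1)$ appearing in the construction of $w$) sits in an exact sequence
$$0\to\cO_{\PP E^\vee}(-1)\to q^*E^\vee\to Q\to 0,\qquad \rank Q=r-1,$$
and the relative tangent bundle is $T_{\PP E^\vee/S}\cong\cO_{\PP E^\vee}(1)\otimes Q$. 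Taking determinants gives $\det Q\cong\cO_{\PP E^\vee}(1)\otimes q^*(\det E)^{-1}$, hence $\det T_{\PP E^\vee/S}\cong\cO_{\PP E^\vee}(r)\otimes q^*(\det E)^{-1}$, and combining with $K_{\PP E^\vee}=K_{\PP E^\vee/S}\otimes q^*K_S$ we obtain
$$K_{\PP E^\vee}^{-1}\cong\cO_{\PP E^\vee}(r)\otimes q^*\bigl(K_S\otimes\det E\bigr)^{-1}.$$
Because $X$ is a smooth hypersurface in the linear system $|\cO_{\PP E^\vee}(1)|$, adjunction gives $K_X\cong\bigl(K_{\PP E^\vee}\otimes\cO_{\PP E^\vee}(1)\bigr)|_X$, and therefore
$$K_X^{-1}\cong\cO_X(r-1)\otimes q^*\bigl(K_S\otimes\det E\bigr)^{-1}.$$

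It then remains to show this line bundle is ample. Since $E$ is ample, $\cO_{\PP E^\vee}(1)$ is ample on $\PP E^\vee$ by Definition \ref{d2.13}, hence so is the positive power $\cO_{\PP E^\vee}(r-1)$ as $r-1\ge 1$, and its restriction $\cO_X(r-1)$ to the closed subvariety $X$ is again ample. On the other hand $q^*\bigl(K_S\otimes\det E\bigr)^{-1}$ is nef on $X$, being the pullback of the nef line bundle $\bigl(K_S\otimes\det E\bigr)^{-1}$ on $S$. Hence for every nonzero class $C\in\overline{NE}(X)$ we have $\bigl(K_X^{-1}\cdot C\bigr)=\bigl(\cO_X(r-1)\cdot C\bigr)+\bigl(q^*(K_S\otimes\det E)^{-1}\cdot C\bigr)>0$, the first summand being strictly positive by Kleiman's criterion for the ample bundle $\cO_X(r-1)$ and the second being nonnegative by nefness; by Kleiman's criterion again, $K_X^{-1}$ is ample, so $X$ is Fano. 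There is no essential difficulty in this argument; the only thing to watch is to keep the conventions for $\PP E^\vee$ and the Serre twist $\cO_{\PP E^\vee}(1)$ consistent, so that the exponent comes out as $r$ in $K_{\PP E^\vee}^{-1}$ and hence, after subtracting the class of $X$ via adjunction, as $r-1$ in $K_X^{-1}$ — matching the $r-1$ copies of $q^*D^b(S)$ in Theorem \ref{t2.6}.
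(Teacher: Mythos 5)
Your proof is correct and follows essentially the same route as the paper: the relative Euler sequence gives $K_{\PP E^\vee/S}=q^*(\det E)\otimes\cO_{\PP E^\vee}(-r)$, adjunction yields $K_X^{-1}\cong q^*(K_S\otimes\det E)^{-1}|_X\otimes\cO_X(r-1)$, and ampleness follows since an ample line bundle tensored with a nef one is ample. The only difference is cosmetic: you spell out via Kleiman's criterion the ``ample plus nef is ample'' step that the paper leaves implicit.
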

\begin{proof} Obviously $K_{\PP E^\vee}=q^*K_S\otimes K_{\PP E^\vee/S}$.
From the exact sequence 
$$0\lra \cO_{\PP E^\vee}\lra q^*E^\vee\otimes \cO_{\PP E^\vee}(1) \lra T_{\PP E^\vee/S}\lra 0,$$
we have $K_{\PP E^\vee/S}=(q^*\det E)\otimes \cO(-r)$. Hence, 
$$K_X=K_{\PP E^\vee}\otimes \cO(1)|_X\cong q^*(K_S\otimes \det E)|_X\otimes \cO_X(1-r).$$
Since $E$ is ample, $\cO_X(1)$ is ample and so is $K_X^{-1}$.
\end{proof}

\subsection{Zero sections of rank 2 vector bundles}\label{s3.2}

In this subsection, we consider a special case of $r=2$ in the above construction. Note that $\dim X=\dim S$ and $\dim Y=\dim S-2$.

\def\Proj{\mathrm{Proj} }
\begin{lemm}\label{l3.2}
The composition $\phi:X\hookrightarrow \PP E^\vee \mapright{q} S$ is the blowup along $Y$. 
\end{lemm}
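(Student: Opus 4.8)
The plan is to identify $\phi: X \to S$ explicitly by working on the two natural loci of $S$: the complement $U = S \setminus Y$ where the section $s$ is nowhere vanishing, and a neighborhood of $Y$ where local coordinates are available. The key point is that $X \subset \PP E^\vee$ is cut out by the vanishing of $w$, which over a point $p \in S$ is the linear functional on the fiber $E^\vee_p$ given by pairing with $s(p) \in E_p$; so the fiber $X_p = X \cap q^{-1}(p)$ consists of those lines $\ell \subset E^\vee_p$ with $\langle s(p), \ell \rangle = 0$.

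First I would treat the locus $U = S \setminus Y$. There $s(p) \neq 0$, so $\langle s(p), -\rangle$ is a nonzero functional on $E^\vee_p$, hence $X_p \subset \PP(E^\vee_p) \cong \PP^{1}$ (recall $r = 2$) is a single reduced point (a hyperplane in a $\PP^1$). Thus $\phi: \phi^{-1}(U) \to U$ is bijective, and a quick local-coordinate check shows it is an isomorphism onto $U$. Second, I would work near $Y$: choose a trivialization $E \cong \cO^{\oplus 2}$ on an affine open $V \subset S$, so $s = (s_1, s_2)$ with $Y \cap V = \{s_1 = s_2 = 0\}$, and the complete intersection hypothesis says $s_1, s_2$ form a regular sequence. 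On $\PP E^\vee|_V = V \times \PP^1$ with homogeneous coordinates $[t_1 : t_2]$ on $\PP^1$ dual to the trivialization, the section $w$ is $s_1 t_1 + s_2 t_2$, so $X \cap (V \times \PP^1) = \{ s_1 t_1 + s_2 t_2 = 0\}$. On the chart $t_1 \neq 0$ (set $t_1 = 1$, $u = t_2$) this is $\{ s_1 + s_2 u = 0\}$, and on $t_2 \neq 0$ it is $\{ s_1 v + s_2 = 0\}$ with $v = t_1/t_2$; these are precisely the standard affine charts of the blowup $\mathrm{Bl}_Y S$ along the ideal $(s_1, s_2)$. This exhibits a canonical isomorphism $X \cap (V \times \PP^1) \cong \mathrm{Bl}_{Y \cap V}(V)$ compatible with the projections to $V$, and since these local isomorphisms are uniquely determined (both sides being the closure of the graph over $V \setminus Y$), they glue to a global isomorphism $X \cong \mathrm{Bl}_Y S$ over $S$.

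The main obstacle — really the only delicate point — is making sure the local model computation genuinely produces the blowup and not merely a variety birational to it. This is where the complete intersection hypothesis enters essentially: $Y$ has codimension exactly $2 = r$, so $s_1, s_2$ is a regular sequence, the Rees algebra $\bigoplus_k (s_1,s_2)^k$ is generated in degree one with no unexpected relations, and $\mathrm{Proj}$ of it is covered by exactly the two charts above. Without this, the zero locus of $s_1 t_1 + s_2 t_2$ could fail to be irreducible or could differ from $\mathrm{Bl}_Y S$ along $Y$. I would also remark that smoothness of $X$ (already noted in the text before Lemma \ref{l3.1}) is consistent here: $\mathrm{Bl}_Y S$ is smooth because $Y \subset S$ is a smooth (codimension $2$) subvariety of a smooth variety. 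Finally, the identification of the exceptional divisor: under this isomorphism, $D = X \cap \PP E^\vee|_Y = q^{-1}(Y) \cap X = \PP(E^\vee|_Y) = \PP(N_{Y/S}^\vee)$, which matches the exceptional divisor of $\mathrm{Bl}_Y S$; this observation will be convenient for invoking Orlov's blowup formula (Theorem \ref{t2.3}) in what follows.
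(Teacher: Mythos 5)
Your proof is correct, and it reaches the conclusion by a more hands-on route than the paper. The paper argues globally: the surjection $E^\vee\to I_Y$ induces a surjection $\mathrm{Sym}(E^\vee)\twoheadrightarrow \bigoplus_{n\ge 0}I_Y^n$, whose $\mathrm{Proj}$ realizes the blowup $\tilde S$ as a closed subscheme of $\PP E$; the rank-two duality $\det E\otimes E^\vee\cong E$ then gives the canonical identification $\PP E\cong \PP E^\vee$, and only at the very end does a local calculation identify the image of $\tilde S$ with $w^{-1}(0)$. You instead do everything in local charts and glue. The two proofs share the same essential inputs — the regular-sequence hypothesis guaranteeing that the Rees algebra of $(s_1,s_2)$ has only the Koszul relation (so the blowup really is the hypersurface you write down, with no embedded or extra components over $Y$), plus the same local computation — but the paper's version makes transparent exactly where $r=2$ enters (via $\det E\otimes E^\vee\cong E$), whereas in your version that isomorphism appears only as the sign-and-swap $[u_1:u_2]\mapsto[t_1:t_2]=[u_2:-u_1]$ needed to match your equation $s_1t_1+s_2t_2=0$ with the standard blowup relation $s_2u_1-s_1u_2=0$; strictly speaking your charts are the standard ones only after this automorphism of the $\PP^1$-factor, which is harmless since the resulting identification still commutes with projection to $S$, but it is worth saying explicitly. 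Your gluing step via uniqueness of the isomorphism over the dense open $V\setminus Y$ is a correct substitute for the paper's global construction, and your closing identification of the exceptional divisor with $\PP(N_{Y/S}^\vee)$ is a useful addendum not spelled out in the paper.
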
 
\begin{proof} Since $Y=s^{-1}(0)$, the ideal sheaf $I$ of $Y$ in $S$ is the image of the dual $s^\vee:E^\vee\to \cO_S$ of $s$. The surjection $E^\vee\to I$ then induces a surjection $\Sym(E^\vee)$ of the symmetric algebra onto $\oplus_{n\ge 0} I^n$ whose $\Proj$ is by definition the blowup $\tilde{S}$ of $S$ along $Y$. Hence we obtain an embedding of $\tilde{S}$ into $\Proj (\Sym E^\vee)=\PP E$.

Since the rank of $E$ is $2$, we have an isomorphism $\det E\otimes E^\vee\cong E$ which induces an isomorphism
$$\PP E^\vee\cong \PP (E^\vee\otimes \det E)\cong \PP E.$$ 
We thus have an embedding  $\tilde{S}\hookrightarrow \PP E\cong \PP E^\vee$. By direct local calculation, one finds that this is precisely the zero locus of $w$. The completes the proof.
\end{proof}

\begin{prop}\label{p3.3}
Let $Y$ be a complete intersection Calabi-Yau variety of codimension 1 or 2. Then $Y$ is a Fano visitor.
\end{prop}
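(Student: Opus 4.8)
The plan is to split into the two cases according to the codimension and to produce a Fano host in each case by combining the constructions of \S\ref{s3.1}--\S\ref{s3.2} with the semiorthogonal decomposition theorems recalled in \S\ref{s2.1}.

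First I would treat the codimension~$1$ case, i.e.\ $Y\subset S=\PP^N$ a smooth Calabi--Yau hypersurface. Here $Y$ is cut out by a section of a line bundle $\cO_{\PP^N}(d)$ with $d=N+1$ (the Calabi--Yau condition), which is ample, so Theorem~\ref{t2.3} does not directly apply with $S=\PP^N$; instead I would realize $Y$ as the zero locus of a section of a rank~$2$ bundle by a standard trick: write $\cO(d)=\cO(a)\oplus\cO(b)$ is false, so rather take $E=\cO_{\PP^N}(1)^{\oplus 2}$ on a different $S$. The cleanest route is actually to use Lemma~\ref{l3.1} after splitting: view $Y\subset\PP^N$ as the complete intersection of two hypersurfaces inside the blowup or inside $\PP^{N+1}$; concretely, a degree-$(N+1)$ hypersurface in $\PP^N$ is a codimension-$2$ complete intersection in an appropriate projective bundle. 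I expect the referee-facing statement to be: take $S$ a projective bundle over $\PP^N$ (or $\PP^1\times\PP^N$) on which $Y$ becomes a codimension-$2$ zero locus of an ample rank-$2$ bundle $E$ with $(K_S\otimes\det E)^{-1}$ nef, then Lemma~\ref{l3.1} gives a Fano host $X=w^{-1}(0)\subset\PP E^\vee$, and Theorem~\ref{t2.6} (with $r=2$) supplies the fully faithful embedding $D^b(Y)\hookrightarrow D^b(X)$ together with the required semiorthogonal decomposition $D^b(X)=\langle q^*D^b(S),D^b(Y)\rangle$. So the codimension-$1$ case reduces to the codimension-$2$ case.

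Next, for the codimension-$2$ case, I would apply the setup of \S\ref{s3.2} directly. Given $Y=s^{-1}(0)\subset S=\PP^N$ with $E$ a rank-$2$ bundle (a direct sum of two ample line bundles $\cO(d_1)\oplus\cO(d_2)$ with $d_1+d_2=N+1$ by the Calabi--Yau condition), Lemma~\ref{l3.2} identifies $\phi:X\to S$ with the blowup of $\PP^N$ along $Y$, so Theorem~\ref{t2.3} yields the fully faithful functor $D^b(Y)\to D^b(X)$ and the semiorthogonal decomposition $D^b(X)=\langle L\phi^*D^b(S),\Phi_0(D^b(Y))\rangle$, which is exactly the form required by Definition~\ref{d2.12}. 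It remains to check that $X$ is Fano: by Lemma~\ref{l3.1} this holds provided $E$ is ample (clear, as a sum of ample line bundles, using Theorem~\ref{t2.14}) and $(K_S\otimes\det E)^{-1}=\cO_{\PP^N}(N+1-d_1-d_2)=\cO_{\PP^N}$ is nef (clear, it is trivial). Hence $X$ is Fano and is a Fano host of $Y$.

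The main obstacle I anticipate is the codimension-$1$ reduction: one must exhibit a smooth projective $S$ (a projective bundle over $\PP^N$) and an ample rank-$2$ bundle $E$ on it so that $Y$ appears as $s^{-1}(0)$ with $\dim s^{-1}(0)=\dim S-2$ and with $(K_S\otimes\det E)^{-1}$ nef, and to verify smoothness and the dimension count of the resulting $X$; the numerical bookkeeping with $K_S$ and $\det E$ on the projective bundle is the only place where something could go wrong, but it is a routine Chern-class computation along the lines of the proof of Lemma~\ref{l3.1}. Everything else is a direct citation of Theorems~\ref{t2.3} and~\ref{t2.6} and Lemmas~\ref{l3.1}--\ref{l3.2}.
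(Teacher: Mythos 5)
Your codimension-$2$ argument is correct and is exactly the paper's: $Y=s^{-1}(0)$ for a section of $E=\cO_{\PP^n}(a)\oplus\cO_{\PP^n}(b)$ with $a+b=n+1$, Lemma~\ref{l3.2} identifies $X$ with the blowup of $\PP^n$ along $Y$, Theorem~\ref{t2.3} gives the semiorthogonal decomposition $\langle D^b(S),D^b(Y)\rangle$, and Lemma~\ref{l3.1} gives ampleness of $K_X^\vee$ since $K_S\otimes\det E$ is trivial and $E$ is a sum of ample line bundles.

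The gap is in the codimension-$1$ case: you correctly see that one must re-embed $Y$ as a codimension-$2$ zero locus, but you never commit to a specific $(S,E)$, and you flag this yourself as ``the main obstacle.'' That obstacle is precisely the content of the proof, so leaving it as ``routine bookkeeping'' is not enough --- especially since your leading candidates do not all work. The paper's choice is the simplest one you mention in passing: take $S=\PP^{n+1}$ (not a projective bundle over $\PP^n$), realize $\PP^n\subset\PP^{n+1}$ as the zero locus of a linear form $s_0\in H^0(\PP^{n+1},\cO(1))$, and extend the degree-$(n+1)$ equation of $Y$ to $s_1\in H^0(\PP^{n+1},\cO(n+1))$. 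Then $Y=s^{-1}(0)$ for $s=(s_0,s_1)$, a regular section of the ample rank-$2$ bundle $E=\cO_{\PP^{n+1}}(1)\oplus\cO_{\PP^{n+1}}(n+1)$, and $K_S\otimes\det E=\cO(-n-2)\otimes\cO(n+2)\cong\cO_{\PP^{n+1}}$ is trivial, so Lemma~\ref{l3.1} applies verbatim and the case reduces to the codimension-$2$ argument via Lemma~\ref{l3.2} and Theorem~\ref{t2.3}. By contrast, the $\PP^1\times\PP^n$ alternative you float runs into trouble: realizing $\PP^n$ as $\{pt\}\times\PP^n$ uses a section of $\cO(1,0)$, which is nef but not ample, so the resulting $E$ fails the ampleness hypothesis of Lemma~\ref{l3.1}; and a projective bundle over $\PP^n$ is an unnecessary complication. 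With the $\PP^{n+1}$ choice made explicit, your proof closes up and coincides with the paper's.
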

\begin{proof}
Let $Y$ be a smooth Calabi-Yau hypersurface in $\PP^n$. The section $s_0$ defining the embedding $\PP^n\to \PP^{n+1}$ and the defining section $s_1\in H^0(\PP^n, \cO_{\PP^n}(n+1))\subset H^0(\PP^{n+1}, \cO_{\PP^{n+1}}(n+1))$ of $Y$ give us a section $s$ of $E=\mathcal{O}_{\PP^{n+1}}(1) \oplus \mathcal{O}_{\PP^{n+1}}(n+1)$ over $S=\PP^{n+1}$, whose zero locus is $Y$. By Lemma \ref{l3.2}, $X \subset \PP E$ is the blowup of $S$ along $Y$. By Theorem \ref{t2.3}, we have the semiorthogonal decomposition
$$ D^b(X) = \langle D^b(S),D^b(Y) \rangle .$$ 
By Lemma \ref{l3.1}, $X$ is Fano since $E$ is ample by Theorem \ref{t2.14} and $K_S\otimes \det E=\cO_{\PP^{n+1}}(-n-2)\otimes \cO_{\PP^{n+1}}(n+2)\cong \cO_{\PP^{n+1}}$ is trivial. Therefore $Y$ is a Fano visitor. 

When $Y$ is a complete intersection Calabi-Yau variety of codimension 2 in $S=\PP^n$, $Y$ is the zero locus of a section of $E=\cO_{\PP^n}(a)\oplus \cO_{\PP^n}(b)$ for $a,b>0$ with $a+b=n+1$. By the same argument as in the codimension 1 case, $Y$ is  a Fano visitor.
\end{proof}

\subsection{General complete intersection Calabi-Yau varieties}
For general complete intersection Calabi-Yau varieties, we use Theorem \ref{t2.6}.
\begin{prop}\label{p3.4} 
A complete intersection Calabi-Yau varieties $Y\subset \PP^n$ of codimension greater than 1 is a Fano visitor.
\end{prop}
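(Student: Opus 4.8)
The plan is to instantiate the Cayley-trick construction of \S\ref{s3.1} with $S=\PP^n$ and $E$ a direct sum of positive line bundles, and then extract everything from Theorem \ref{t2.6} and Lemma \ref{l3.1}. A complete intersection Calabi--Yau variety $Y\subset\PP^n$ of codimension $r>1$ is the zero locus of a regular section $s\in H^0(\PP^n,E)$ of the bundle $E=\bigoplus_{i=1}^{r}\cO_{\PP^n}(a_i)$ with all $a_i>0$, and the adjunction (Calabi--Yau) condition forces $\sum_{i=1}^{r}a_i=n+1$. Since $\rank E=r\ge 2$ and $\dim Y=n-r=\dim S-\rank E$, the hypotheses of Theorem \ref{t2.6} are satisfied, so taking $X=w^{-1}(0)\subset\PP E^\vee$ as in \S\ref{s3.1} we obtain the semiorthogonal decomposition $D^b(X)=\langle q^*D^b(S),\,\ldots,\,q^*D^b(S)\otimes_{\cO_X}\cO_X(r-2),\,D^b(Y)\rangle$. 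In particular $D^b(Y)\hookrightarrow D^b(X)$ is fully faithful and $D^b(X)=\langle {}^{\perp}D^b(Y),\,D^b(Y)\rangle$.

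It then remains only to check that $X$ is Fano, for which I verify the two hypotheses of Lemma \ref{l3.1}. First, $E$ is ample: each summand $\cO_{\PP^n}(a_i)$ is ample since $a_i>0$, and a direct sum of ample bundles is ample by Theorem \ref{t2.14}. Second, $(K_S\otimes\det E)^{-1}$ is nef --- indeed trivial --- since $K_S=\cO_{\PP^n}(-n-1)$ and $\det E=\cO_{\PP^n}\!\left(\sum_i a_i\right)=\cO_{\PP^n}(n+1)$, whence $K_S\otimes\det E\cong\cO_{\PP^n}$. Thus Lemma \ref{l3.1} shows $X$ is Fano, and therefore $Y$ is a Fano visitor.

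I do not expect any genuine obstacle here: the geometric content has already been packaged into Theorem \ref{t2.6} and Lemma \ref{l3.1}, and the only points that need a word are that $s$ is a regular section (immediate, as $Y$ is a complete intersection of codimension exactly $r$) and that $X$ is smooth (the local-coordinate check already recorded in \S\ref{s3.1}). Note this reproves the codimension-$2$ case of Proposition \ref{p3.3}: when $r=2$, Theorem \ref{t2.6} degenerates to the blowup formula of Theorem \ref{t2.3}, in accordance with Lemma \ref{l3.2}, whereas the codimension-$1$ case of Proposition \ref{p3.3} genuinely needs the blowup formula since Theorem \ref{t2.6} requires $r\ge 2$.
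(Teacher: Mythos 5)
Your proof is correct and follows exactly the paper's own argument: realize $Y$ as the zero locus of a regular section of $E=\oplus_i\cO_{\PP^n}(a_i)$ with $\sum a_i=n+1$, check that $X\subset\PP E^\vee$ is Fano via Lemma \ref{l3.1} (since $E$ is ample and $K_S\otimes\det E$ is trivial), and apply Theorem \ref{t2.6} to get the semiorthogonal decomposition exhibiting $D^b(Y)$ inside $D^b(X)$. The extra remarks on regularity, smoothness, and the comparison with the $r=2$ case are consistent with the paper but not needed beyond what it already records.
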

\begin{proof}
Let $Y$ be a complete intersection Calabi-Yau variety in $S=\PP^n$. Then $Y$ is the zero locus of a section of an ample vector bundle $E=\oplus_{i=1}^r \cO_{\PP^n}(a_i)$ for $a_i>0$, $\sum a_i=n+1$. Since $K_{\PP^n}\otimes \det E$ is trivial and $E$ is ample, $X$ is Fano by Lemma \ref{l3.1}. By Theorem \ref{t2.6}, we have the semiorthogonal decomposition
$$ D^b(X)= \langle q^*D^b(S), \cdots, q^*D^b(S) \otimes_{\cO_X} {\cO_X}(r-2), D^b(Y)\rangle .$$
Therefore $Y$ is a Fano visitor.
\end{proof}

Combining Propositions \ref{p3.3} and \ref{p3.4}, we obtain the following.
\begin{theo}\label{t3.5}
All complete intersection Calabi-Yau varieties are Fano visitors.
More precisely, if $Y$ is a complete intersection Calabi-Yau variety of codimension $r$ in a projective space $\PP^n$, then there is a Fano host $X$ of dimension $n+r-2$ for $r\ge 2$ and $n+1$ for $r=1$. Moreover the Fano host $X$ is a hypersurface of a projective bundle over a projective space.
\end{theo}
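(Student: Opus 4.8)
The plan is to combine the two Calabi--Yau propositions already proved with a general principle: a complete intersection Calabi--Yau variety $Y$ of codimension $r$ in $\PP^n$ is automatically of Calabi--Yau type precisely because the degrees of the defining equations sum to $n+1$, which is exactly the numerical hypothesis needed in Lemma \ref{l3.1}. So the bulk of the argument is already in Propositions \ref{p3.3} and \ref{p3.4}; what remains for Theorem \ref{t3.5} is to read off the dimension count and the ``hypersurface in a projective bundle'' shape of the Fano host from the construction in \S\ref{s3.1}.

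First I would dispose of the case $r=1$: here $Y$ is a Calabi--Yau hypersurface of degree $n+1$ in $\PP^n$, and the construction in the proof of Proposition \ref{p3.3} takes $S=\PP^n$, $E=\cO_{\PP^n}(1)\oplus\cO_{\PP^n}(n)$ (equivalently one embeds $\PP^n\hookrightarrow\PP^{n+1}$ and works there), so that $X\subset\PP E$ is the blowup of $\PP^n$ along $Y$, a variety of dimension $n+1$. Note $\PP E$ is a projective bundle over a projective space and $X$ is cut out by the single section $w$ of $\cO_{\PP E^\vee}(1)$, hence a hypersurface in that projective bundle; and $X$ is Fano by Lemma \ref{l3.1} since $E$ is ample (Theorem \ref{t2.14}) and $K_S\otimes\det E$ is trivial. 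For $r\ge 2$, I would invoke the construction of \S\ref{s3.1} directly with $S=\PP^n$ and $E=\oplus_{i=1}^r\cO_{\PP^n}(a_i)$, $a_i>0$, $\sum a_i=n+1$: then $X=w^{-1}(0)\subset\PP E^\vee$ is smooth, is a hypersurface in the projective bundle $\PP E^\vee\to\PP^n$, has dimension $\dim S+r-2=n+r-2$ by the formula recorded in \S\ref{s3.1}, and is Fano by Lemma \ref{l3.1} because $E$ is ample and $K_{\PP^n}\otimes\det E\cong\cO_{\PP^n}(-n-1)\otimes\cO_{\PP^n}(n+1)$ is trivial (in particular its inverse is nef). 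The semiorthogonal decomposition $D^b(X)=\langle q^*D^b(S),\dots,q^*D^b(S)\otimes\cO_X(r-2),D^b(Y)\rangle$ from Theorem \ref{t2.6} then exhibits $D^b(Y)$ as a right-admissible subcategory with $D^b(X)=\langle{}^\perp D^b(Y),D^b(Y)\rangle$, which is what Definition \ref{d2.12} requires; so $Y$ is a Fano visitor with Fano host $X$.

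Assembling these: every complete intersection Calabi--Yau variety $Y$ lies in some $\PP^n$ as the zero locus of a regular section of $E=\oplus_{i=1}^r\cO_{\PP^n}(a_i)$ with $a_i>0$ and $\sum a_i=n+1$ (this is the definition of complete intersection Calabi--Yau), so one of the two cases above applies, giving a Fano host of dimension $n+1$ when $r=1$ and $n+r-2$ when $r\ge 2$, in each case realized as a hypersurface in a projective bundle over $\PP^n$. I do not expect a genuine obstacle here, since the theorem is essentially a bookkeeping summary of the preceding propositions; the only point that needs a word of care is the smoothness of $X$ and the regularity of the section $w$, but these are the ``straightforward local coordinate'' checks already asserted in \S\ref{s3.1}, and the identification of $X$ as a blowup when $r=2$ is Lemma \ref{l3.2}. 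The mildly delicate bookkeeping step is simply matching the two dimension formulas to the two regimes $r=1$ and $r\ge 2$, since the $r=1$ host is built inside $\PP^{n+1}$ rather than $\PP^n$ and so does not literally fit the $n+r-2$ formula.
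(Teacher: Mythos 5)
Your proposal is essentially the paper's own proof: Theorem \ref{t3.5} is stated in the paper simply as the combination of Propositions \ref{p3.3} and \ref{p3.4}, with the dimension counts and the ``hypersurface in a projective bundle'' shape read off from the construction in \S\ref{s3.1}, exactly as you do. One slip to fix in your $r=1$ case: the correct data is $S=\PP^{n+1}$ with $E=\cO_{\PP^{n+1}}(1)\oplus\cO_{\PP^{n+1}}(n+1)$ (not $S=\PP^n$ with $\cO(1)\oplus\cO(n)$, whose zero locus would be a $(1,n)$ complete intersection rather than $Y$), and $X$ is the blowup of $\PP^{n+1}$ along the codimension-two subvariety $Y$ --- blowing up $\PP^n$ along the divisor $Y$ would be an isomorphism and would have dimension $n$, not $n+1$; your closing remark shows you already know this, but the displayed bundle should match.
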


Genki Ouchi observed (\cite{Ouchi}) that by the above arguments we can actually choose a lower dimensional Fano host for a \emph{general} complete intersection Calabi-Yau varieties.
\begin{prop}\label{p3.6} \cite{Ouchi}  Let $Y\subset \PP^{d+r}$ be a complete intersection Calabi-Yau variety of dimension $d$ defined by the vanishing of homogeneous polynomials $f_1,\cdots, f_r$. Suppose $r\le 2$ or $Y$ is general in the sense that we can choose the defining equations such that the projective variety $S$ defined by the vanishing of $f_3, \cdots, f_r$ is smooth. Then  there is a Fano host $X$ of $Y$ of dimension $d+2$.
\end{prop}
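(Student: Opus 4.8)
The plan is to run the Cayley-trick construction of \S\ref{s3.1} over a smaller base. Rather than feeding all $r$ defining equations into the trick---which gives a Fano host of dimension $d+2r-2$ by Theorem~\ref{t3.5}---I would use only two of them and let the other $r-2$ cut out a smooth variety $S$ of dimension $d+2$ inside which $Y$ is a codimension-$2$ complete intersection. The construction of \S\ref{s3.2} then realizes the resulting hypersurface $X\subset\PP E^\vee$ as the blowup of $S$ along $Y$, whose derived category contains $D^b(Y)$ as a semiorthogonal component, and the Calabi-Yau numerics make $X$ Fano; since $\dim X=\dim S=d+2$, this is the desired host.

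The case $r=1$ is already covered by Proposition~\ref{p3.3}: a Calabi-Yau hypersurface in $\PP^{d+1}$ has a Fano host of dimension $d+2$. So assume $r\ge 2$, put $n=d+r$ and $a_i=\deg f_i$, and recall the Calabi-Yau condition $\sum_{i=1}^r a_i=n+1$. Let $S\subset\PP^n$ be the subvariety cut out by $f_3,\dots,f_r$ (so $S=\PP^{d+2}$ if $r=2$). By hypothesis $S$ is smooth---automatically so when $r\le 2$, and by the generality assumption when $r\ge 3$---and then $\dim S=n-(r-2)=d+2$. Since $f_1,\dots,f_r$ is a regular sequence on $\PP^n$, the pair $(f_1,f_2)$ defines a regular section of the rank-$2$ bundle $E:=\cO_S(a_1)\oplus\cO_S(a_2)$ whose zero locus is $Y$, cut out in codimension $2$ in $S$; moreover $E$ is ample by Theorem~\ref{t2.14}, being a direct sum of ample line bundles.

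Now I would invoke the results of \S\ref{s3.2} directly. Let $X=w^{-1}(0)\subset\PP E^\vee$ be the hypersurface attached to this section. By Lemma~\ref{l3.2}, $X\to S$ is the blowup of $S$ along $Y$; in particular $X$ is smooth of dimension $d+2$, and Theorem~\ref{t2.3} (with $c=2$) gives the semiorthogonal decomposition $D^b(X)=\langle L\pi^*D^b(S),\,D^b(Y)\rangle$, which is exactly the form required by Definition~\ref{d2.12}. Finally, $X$ is Fano by Lemma~\ref{l3.1}: $E$ is ample, and by adjunction $K_S\cong\cO_S(-(n+1)+\sum_{i=3}^r a_i)$, so $K_S\otimes\det E\cong\cO_S(-(n+1)+\sum_{i=1}^r a_i)\cong\cO_S$ by the Calabi-Yau condition; hence $(K_S\otimes\det E)^{-1}$ is trivial, in particular nef.

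I do not anticipate a genuine obstacle: once one decides to peel off two equations, every step is forced by the machinery already assembled in \S\ref{s3}. The only role of the hypothesis is to guarantee that $S$ is smooth---this is free for $r\le 2$, where $S$ is a projective space, and is precisely the generality condition for $r\ge 3$, needed both so that $X$ is smooth and so that the blowup formula of Theorem~\ref{t2.3} applies. One should also note that $Y\subset S$ is smooth, being a smooth subvariety of the smooth $S$, so that the center of the blowup is as required.
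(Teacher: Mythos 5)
Your proposal is correct and follows essentially the same route as the paper's own proof: peel off $f_1,f_2$, let $S$ be the smooth $(d+2)$-dimensional variety cut out by the remaining equations, apply the construction of \S\ref{s3.1} to $E=\cO_S(a_1)\oplus\cO_S(a_2)$, and conclude via Lemma~\ref{l3.2}, Theorem~\ref{t2.3} and Lemma~\ref{l3.1}. The only (immaterial) difference is that the paper dispatches $r\le 2$ directly by Proposition~\ref{p3.3}, while you fold $r=2$ into the general argument with $S=\PP^{d+2}$.
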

\begin{proof}
When $r\le 2$, the proposition follows from Proposition \ref{p3.3}. When $r\ge 3$, it is easy to see that $S$ is Fano because $Y$ is Calabi-Yau. Let $a_1, a_2>0$ denote the degrees of $f_1, f_2$ and let $E=\cO_{\PP^{d+r}}(a_1)|_S\oplus \cO_{\PP^{d+r}}(a_2)|_S$ which has a regular section $s$ defined by $(f_1,f_2)$. By definition, $s^{-1}(0)=Y$ and we have a smooth variety $X=w^{-1}(0)\subset \PP E^\vee$ by the construction in \S\ref{s3.1} which is Fano by Lemma \ref{l3.1}. By Lemma \ref{l3.2}, we find that $X$ is the blowup of $S$ along $Y$ and hence there is a fully faithful embedding $D^b(Y)\to D^b(X)$ by Theorem \ref{t2.3}. So we proved the proposition.
\end{proof}


\bigskip

\section{General complete intersection varieties}\label{s4}

The goal of this section is to prove the following.
\begin{theo}\label{t4.3} (Main Theorem)\\
Every smooth complete intersection is a Fano visitor. Moreover, we can choose a Fano host which is a hypersurface of a projective bundle over a projective space.
\end{theo}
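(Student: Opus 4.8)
The plan is to reduce the case of an arbitrary smooth complete intersection $Y\subset\PP^n$ of codimension $r$ to the Calabi--Yau situation handled in \S\ref{s3}, by twisting so that the analogue of the ``Calabi--Yau'' numerical condition $\sum a_i=n+1$ in Lemma \ref{l3.1} becomes available. Concretely, suppose $Y=s^{-1}(0)$ for a regular section $s$ of $F=\oplus_{i=1}^r\cO_{\PP^n}(a_i)$ with $a_i>0$. If $\sum a_i\le n+1$ then $Y$ is Fano or Calabi--Yau and we are already done (in the Fano case $Y$ is its own host, in the Calabi--Yau case apply Theorem \ref{t3.5}), so assume $\sum a_i>n+1$, i.e. $Y$ is of general type or on the boundary. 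The idea is to embed $\PP^n$ as a linear subspace of a larger projective space, or better, to replace the base $S=\PP^n$ by a projective bundle $P=\PP(\cO_{\PP^N}\oplus\cO_{\PP^N}(1)^{\oplus m})$ (or a similarly chosen toric bundle over a projective space) on which there is a vector bundle $E$, built from $F$ together with enough positive line bundles, such that $E$ is ample, $(K_P\otimes\det E)^{-1}$ is nef, and $Y$ is cut out of $P$ by a regular section of $E$ realizing $Y$ with the same normal data. Then the construction of \S\ref{s3.1} produces $X=w^{-1}(0)\subset\PP E^\vee$, a hypersurface in a projective bundle over $P$ (hence over a projective space, after unwinding $P$ as a toric bundle), which is Fano by Lemma \ref{l3.1}, and Theorem \ref{t2.6} gives the semiorthogonal decomposition
$$D^b(X)=\langle q^*D^b(P),\dots,q^*D^b(P)\otimes\cO_X(r-2),D^b(Y)\rangle,$$
exhibiting $D^b(Y)$ as an admissible subcategory of $D^b(X)$ with $X$ Fano.

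First I would make the choice of base and bundle precise. The natural move is: keep $S=\PP^n$ but enlarge the bundle by adding auxiliary line bundle summands and correspondingly enlarging the projective space. Equivalently, view $Y\subset\PP^n\subset\PP^{n+k}$ as a complete intersection of the original $r$ hypersurfaces together with $k$ hyperplanes; this replaces $(n,r,\{a_i\})$ by $(n+k,\,r+k,\,\{a_1,\dots,a_r,1,\dots,1\})$ without changing $Y$. This does not by itself fix the Calabi--Yau obstruction, so the real device must be to absorb the excess $\sum a_i-(n+1)$ by passing to a projective bundle base: over $S'=\PP^m$ take $P=\PP(\bigoplus_j\cO_{\PP^m}(b_j))$ for suitable $b_j\ge 0$, so that $K_P$ is very negative, and let $E$ be the pullback of $\oplus\cO(a_i)$ twisted by $\cO_P(1)$-powers, chosen so that $\det E$ exactly (or sub-critically) cancels $K_P$ while keeping $E$ ample; ampleness of each twisted summand and of the direct sum follows from Theorem \ref{t2.14}, and nefness of $(K_P\otimes\det E)^{-1}$ is then a toric computation on $P$ which can be checked by the Toric Kleiman Criterion (Theorem \ref{t2.15}). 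One must simultaneously verify that some section of this $E$ has zero locus isomorphic to the given $Y$ and is regular, i.e. of the right codimension; this is where the complete intersection hypothesis on $Y$ is used, together with the freedom to choose the twisting section generically.

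The main obstacle I anticipate is precisely the bookkeeping that makes all three requirements compatible at once: (i) $Y$ really occurs as the zero scheme of a regular section of $E$ on the chosen base $P$, (ii) $E$ is ample, and (iii) $K_P\otimes\det E$ is anti-nef (ideally trivial, to mimic the Calabi--Yau case). Requirement (iii) pushes $\det E$ to be large, hence the twists big, which is good for (ii); but (i) constrains the section, and making the zero locus of the enlarged section still be $Y$ (and nothing more) forces the extra summands of $E$ to be cut by sections whose vanishing is ``trivial'' on $Y$ — e.g. coordinate-type sections — which in turn constrains how positive those summands may be. Balancing these is the crux; I expect it to come down to choosing the base as a projective bundle $\PP(\cO\oplus\cO(1)^{\oplus m})\to\PP^{n}$ for $m$ large, with $E$ a direct sum of $\cO(a_i)$'s pulled back from $\PP^n$ and a block of relative $\cO(1)$'s, whose $\det$ against $K_P$ can be tuned by $m$ to reach nefness, after which Lemma \ref{l3.1}, Theorem \ref{t2.6}, and the smoothness check via local coordinates from \S\ref{s3.1} finish the argument exactly as in the Calabi--Yau case.
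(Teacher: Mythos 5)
Your reduction of the problem to ``realize $Y$ as the zero locus of a regular section of a bundle $E$ on some $S$ so that $X=w^{-1}(0)\subset\PP E^\vee$ is Fano, then apply Theorem \ref{t2.6}'' is exactly right, and you even write down the construction that actually works: re-embed $Y\subset\PP^n\subset\PP^{n+k}$ and adjoin $k$ hyperplane summands, replacing $(n,r,\{a_i\})$ by $(n+k,\,r+k,\,\{a_1,\dots,a_r,1,\dots,1\})$. But you then discard this move on the grounds that it ``does not fix the Calabi--Yau obstruction,'' i.e.\ that $K_S\otimes\det E=\cO(\sum a_i-n-1)$ is unchanged and Lemma \ref{l3.1} still fails. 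That is the gap: Lemma \ref{l3.1} is only a sufficient criterion, and the whole point is that one does not need $(K_S\otimes\det E)^{-1}$ to be nef. Adjunction gives
$$K_X^\vee=q^*\bigl(K_S^\vee\otimes\det E^\vee\bigr)\otimes\cO_{\PP E^\vee}(\mathrm{rank}\,E-1)\big|_X,$$
and since every summand of $E=\cO_S(1)^{\oplus k}\oplus\bigoplus_i\cO_S(a_i)$ is $\cO_S(1)$ twisted by a nef line bundle, one has $\cO_{\PP E^\vee}(1)=\cO_{\PP F^\vee}(1)\otimes q^*\cO_S(1)$ with $F=E\otimes\cO_S(-1)$ nef; the middle factor therefore contributes an extra $q^*\cO_S(k+r-1)$ of positivity along the base, which grows with the rank. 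One finds $K_X^\vee=q^*\cO_S(n+k+r-\sum a_i)\otimes\cO_{\PP F^\vee}(k+r-1)|_X$, and for $k>\sum a_i-n-r$ this is the tensor product of the pullback of an ample line bundle with a nef, fiberwise ample line bundle, hence ample by a Kleiman-type curve-by-curve argument (or Theorem \ref{t2.15}). So the re-embedding you dismissed already does the whole job, provided you replace the appeal to Lemma \ref{l3.1} by this direct computation.

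The alternative you propose instead --- a projective-bundle base $P$ with the summands of $E$ twisted by powers of $\cO_P(1)$ so that $\det E$ cancels $K_P$ and Lemma \ref{l3.1} applies verbatim --- is not carried out, and the tension you yourself flag at the end is genuine: twisting a summand $\cO(a_i)$ by $\cO_P(1)$ changes which subschemes arise as zero loci of its sections, and it is far from clear that the common zero locus can be kept equal to $Y$ (and the section regular, with $X$ smooth) while making $\det E$ large enough to absorb $K_P$. As written, the proposal identifies the correct framework and the correct finishing step (Theorem \ref{t2.6}), but contains no completed proof that any candidate $X$ is Fano, which is the heart of the matter.
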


Let $S$ be a smooth projective variety and $s$ be a regular section of a vector bundle $E$ of rank $r$ over $S$ whose zero locus is a smooth subvariety $Y=s^{-1}(0)$ of codimension $r$. By the isomorphism 
$$H^0(S,E)\cong H^0(\PP E^\vee, \cO_{\PP E^\vee}(1)),$$
we have a section $w$ of $\cO_{\PP E^\vee}(1)$ whose zero locus $X=w^{-1}(0)$ is a hypersurface in $\PP E^\vee$ which is smooth by direct local calculation. 

The key point of the proof of Proposition \ref{p3.4} is recapitulated as follows.
\begin{prop}\label{p4.1}
If $X$ is Fano then $Y$ is a Fano visitor.
\end{prop}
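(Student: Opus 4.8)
The plan is to deduce Proposition \ref{p4.1} directly from the semiorthogonal decomposition of Theorem \ref{t2.6}, which applies verbatim in this generality since we have only assumed that $s$ is a regular section of a rank $r\ge 2$ bundle $E$ on a smooth projective $S$ with $Y=s^{-1}(0)$ of codimension $r$, and that $X=w^{-1}(0)\subset \PP E^\vee$ is the associated hypersurface. Theorem \ref{t2.6} gives
$$ D^b(X)= \langle q^*D^b(S), q^*D^b(S)\otimes_{\cO_X}\cO_X(1),\cdots, q^*D^b(S)\otimes_{\cO_X}\cO_X(r-2), D^b(Y)\rangle, $$
where $q\colon X\to S$ is the restriction of the projective bundle map $\PP E^\vee\to S$ (one should note $X$ is smooth, as recorded above, so that $D^b(X)$ behaves well). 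In particular the last piece of this decomposition exhibits a fully faithful embedding $D^b(Y)\hookrightarrow D^b(X)$ with left orthogonal complement generated by the preceding admissible subcategories, so $D^b(X)=\langle {}^{\perp}D^b(Y), D^b(Y)\rangle$ in the sense of Definition \ref{d2.12}.

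First I would state that the only hypothesis needed beyond the standing setup is that $X$ be Fano, which is exactly what is being assumed; then the construction of \S\ref{s3.1} already provides $X$ as a hypersurface in $\PP E^\vee$, a projective bundle, and Theorem \ref{t2.6} provides the semiorthogonal decomposition. Next I would observe that each functor in the decomposition is fully faithful (for $q^*D^b(S)$ twisted by $\cO_X(i)$ this is part of Theorem \ref{t2.6}; for $D^b(Y)$ likewise), and that these subcategories are admissible, so the full subcategory $D^b(Y)$ has a well-defined left orthogonal and $D^b(X)=\langle{}^\perp D^b(Y),D^b(Y)\rangle$. Combined with the assumption that $X$ is Fano, this is precisely the assertion that $Y$ is a Fano visitor with Fano host $X$, by Definition \ref{d2.12}.

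There is essentially no obstacle here: the proposition is a bookkeeping consequence of Theorem \ref{t2.6} together with the definition of Fano visitor, and it is stated in order to isolate the one remaining geometric input — namely, constructing an embedding $S\hookrightarrow \PP^N$ (or more precisely realizing the given complete intersection $Y$ inside a suitable smooth $S$ together with an ample bundle $E$) so that the criterion $K_S\otimes\det E)^{-1}$ nef and $E$ ample of Lemma \ref{l3.1} forces $X$ to be Fano. That is the content of the subsequent arguments in \S\ref{s4}; Proposition \ref{p4.1} itself merely says "once $X$ is Fano, we are done," and its proof is one line invoking Theorem \ref{t2.6} and Definition \ref{d2.12}. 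If anything deserves a word of care, it is checking that the construction produces a genuine hypersurface (not something degenerate) and that the smoothness of $X$ holds — but these are already asserted above by direct local calculation, so I would simply cite them.

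\begin{proof}
By the construction in \S\ref{s4}, $X=w^{-1}(0)$ is a smooth hypersurface in the projective bundle $\PP E^\vee$ over $S$, and by Theorem \ref{t2.6} we have the semiorthogonal decomposition
$$ D^b(X)= \langle q^*D^b(S), \cdots, q^*D^b(S) \otimes_{\cO_X} {\cO_X}(r-2), D^b(Y)\rangle, $$
where $q\colon X\to S$ is induced by the projective bundle map. In particular the embedding $D^b(Y)\to D^b(X)$ is fully faithful and $D^b(X)=\langle {}^\perp D^b(Y), D^b(Y)\rangle$. Since $X$ is assumed to be Fano, $X$ is a Fano host of $Y$ in the sense of Definition \ref{d2.12}, so $Y$ is a Fano visitor.
\end{proof}
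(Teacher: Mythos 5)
Your proof is correct and is essentially identical to the paper's: both invoke Theorem \ref{t2.6} to obtain the semiorthogonal decomposition of $D^b(X)$ whose last component is $D^b(Y)$, and then conclude from Definition \ref{d2.12} that the Fano variety $X$ is a Fano host of $Y$. The extra remarks about smoothness of $X$ and admissibility are harmless but not needed beyond what the paper already records.
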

\begin{proof}
By Theorem \ref{t2.6}, we have the semiorthogonal decomposition
$$ D^b(X)= \langle q^*D^b(S), \cdots, q^*D^b(S) \otimes_{\cO_X} {\cO_X}(r-2), D^b(Y) \rangle .$$
Therefore $Y$ is a Fano visitor.
\end{proof}

Thus, to prove that a variety $Y$ is a Fano visitor, we only have to find an embedding $Y\hookrightarrow S$ such that $Y=s^{-1}(0)$ for a regular section $s$ of a vector bundle $E$ over $S$ of rank $r=\mathrm{codim}_SY$ and that $X=w^{-1}(0)$ is Fano.

\subsection{Construction}\label{s4.1}

Let $Y$ be a smooth complete intersection variety in $\PP^{n}$ of codimension $c$. We may assume that $Y$ is the zero locus of a regular section of $$\mathcal{O}_{\PP^n}(d_1) \oplus \cdots \oplus \mathcal{O}_{\PP^n}(d_c),\quad d_1 \geq \cdots \geq d_c \geq 1.$$ Let us fix a positive integer $r$ which is greater than $d_1+\cdots+d_c-n-c$ and $1-c$. 
Let $S=\PP^{n+r}$ and 
$$E=\mathcal{O}_S(1) \oplus \cdots \oplus \mathcal{O}_S(1) \oplus \mathcal{O}_S(d_1) \oplus \cdots \oplus \mathcal{O}_S(d_c)$$ be the rank $r+c$ ample vector bundle on $S$. After choosing $r$ sections of $\cO_S(1)$ that define the embedding $\PP^n\subset \PP^{n+r}$, we find that $Y$ is the zero locus $s^{-1}(0)$ of a section $s$ of $E$ and that $\mathrm{codim}_SY=r+c=\mathrm{rank}\, E$. By the isomorphism 
$$H^0(S,E)\cong H^0(\PP E^\vee, \cO_{\PP E^\vee}(1)),$$
we have a section $w$ of $\cO_{\PP E^\vee}(1)$ whose zero locus $X=w^{-1}(0)$ is a smooth hypersurface in $\PP E^\vee$.

\subsection{Proof of Theorem \ref{t4.3}}\label{s4.2}

By Proposition \ref{p4.1}, it suffices to show that $X$ is a smooth Fano variety.

Because $Y$ is a smooth complete intersection, it is straightforward to check that $X$ is smooth by local calculation.
Let 
$F=E \otimes \mathcal{O}_S(-1)= \mathcal{O}_S \oplus \cdots \oplus \mathcal{O}_S \oplus \mathcal{O}_S(d_1-1) \oplus \cdots \oplus \mathcal{O}_S(d_c-1).$ Then $\PP E^\vee$ is canonically isomorphic to $\PP F^\vee$ and $\mathcal{O}_{\PP E^\vee}(1)=\mathcal{O}_{\PP F^\vee}(1) \otimes q^*\mathcal{O}_S(1)$ where
$q:\PP E^\vee\to S$ denotes the projective bundle map. Consider the Euler sequence
$$ 0 \to \mathcal{O}_{\PP E^\vee} \to q^*E^{\vee} \otimes \mathcal{O}_{\PP E^\vee}(1) \to T_{\PP E^\vee/S} \to 0 $$
which implies
$$ K_{\PP E^\vee}^\vee \cong q^*K_S^\vee \otimes q^* \det E^{\vee} \otimes \mathcal{O}_{\PP E^\vee}(r+c) .$$
By the adjuction formula, 
$$ K^\vee_X = K^\vee_{\PP E^\vee} \otimes \mathcal{O}_{\PP E^\vee}(-1)|_X $$
$$ = q^*\mathcal{O}_S(n+1-d) \otimes \mathcal{O}_{\PP E^\vee}(r+c-1)|_X $$
$$ = q^*\mathcal{O}_S(n+r+c-d) \otimes \mathcal{O}_{\PP F^\vee}(r+c-1)|_X $$
where $d=d_1+\cdots +d_c$.
Since $r > d-n-c$ and $r>1-c$ by choice, $q^*\mathcal{O}_S(n+r+c-d)$ is a nef line bundle and $\mathcal{O}_{\PP F^\vee}(r+c-1)$ is also a nef line bundle on $\PP E^\vee=\PP F^\vee$ by Theorem \ref{t2.14} because $F$ is a direct sum of nef line bundles. 

Let $C$ be an irreducible curve in $\PP E^\vee=\PP F^\vee$. If $q(C)$ is a point, then degree of $\mathcal{O}_{\PP F^\vee}(r+c-1)|_C$ is positive because $\cO_{\PP F^\vee}(1)$ is ample on each fiber of $q:\PP F^\vee\to S$. If $q(C)$ is a curve, then the degree of $q^*\mathcal{O}_S(n+r+c-d)|_C$ is positive. Therefore for any irreducible curve $C \subset \PP F^\vee $, $q^*\mathcal{O}_S(n+r+c-d) \otimes \mathcal{O}_{\PP F^\vee}(r+c-1)|_C$ has positive degree. By toric Kleiman's criterion (Theorem \ref{t2.15}) or by the fact that the Picard number of $\PP F^\vee$ is 2, we find that $q^*\mathcal{O}_S(n+r+c-d) \otimes \mathcal{O}_{\PP F^\vee}(r+c-1)$ is an ample line bundle on the toric variety $\PP F^\vee$. Therefore its restriction $K_X^\vee = q^*\mathcal{O}_S(n+r+c-d) \otimes \mathcal{O}_{\PP F^\vee}(r+c-1)|_X $ is ample as desired.

\begin{rema}
Our construction and result work for all smooth projective varieties which are zero loci $Y=s^{-1}(0)$ of regular sections $s$ of
vector bundles $E$ over smooth projective varieties $S$ whose projectivizations $\PP E^\vee$ are Fano such that 
$\dim Y=\dim S-\mathrm{rank}\, E$ and that the hypersurfaces  $X=w^{-1}(0)$ are smooth Fano where $w$ is the section of $\cO_{\PP E^\vee}(1)$ given by $s$ via the isomorphism $H^0(S, E)\cong H^0(\PP E^\vee, \cO_{\PP E^\vee}(1))$. For instance, rank 2 Fano bundles over homogeneous varieties (cf. \cite{APJW, MOS}) may provide us with a sequence of Fano visitors.
\end{rema}

\bigskip

\section{Questions and problems}\label{s5}

As discussed in \S\ref{s1}, we can raise many interesting questions and problems related to Fano visitors and hosts.
Since we have sufficiently many Fano visitors, it seems reasonable to
introduce the following.
\begin{defi} 
The \emph{Fano dimension} of a smooth projective variety $Y$ is the minimum dimension of Fano hosts $X$ of $Y$.
We define the Fano dimension to be infinite if no Fano hosts exist.

The \emph{Fano number} of a smooth projective variety $Y$ is the number of deformation equivalence classes of irreducible Fano hosts $X$ of $Y$ of minimal dimension (i.e. $\dim X$ equals the Fano dimension of $Y$).

Let $\kappa_Y(n)$ be the number of deformation equivalence classes of irreducible Fano hosts $X$ of $Y$ of dimension $n$. The function $\kappa_Y:\ZZ_{>0}\to \ZZ_{\ge 0}$ defined by $n\mapsto \kappa_Y(n)$ is called the \emph{Fano function} of $Y$. 
\end{defi}
The definitions of Fano number and Fano function make sense because of Theorem \ref{t2.8}. 
The following problem seems quite natural.

\begin{prob}
Find the Fano number of a quartic surface, a quintic 3-fold, or more generally a complete intersection Calabi-Yau variety.
\end{prob}

A direct consequence of Theorem \ref{t3.5} is that the Fano dimension of an elliptic curve is at most 3 and that of a quartic surface is at most 4 while that for a quintic 3-fold is at most 5. A Calabi-Yau hypersurface $Y\subset \PP^n$
has Fano dimension at most $n+1$. If $Y\subset\PP^n$ is a codimension $r$ Calabi-Yau complete intersection, then $Y$ has Fano dimension at most $n+r-2$.

\begin{exam}
The Fano dimension of an elliptic curve is 3 because the $K$-groups of Fano surfaces are finitely generated abelian groups while those of elliptic curves are not finitely generated.
\end{exam}

When $Y$ is $\PP^1$, $\kappa_Y(1)=1$. Conversely, if $Y$ is a smooth projective variety of dimension $>0$ with $\kappa_Y(1)=1$, then $Y$ is $\PP^1$ because $\kappa_Y(1)=1$ implies that $Y$ is either a point or $\PP^1$. Hence the Fano function characterizes $\PP^1$.
It will be interesting to investigate how strong the Fano function is as an invariant of the variety $Y$. 
\begin{conj}
The Fano function determines del Pezzo surfaces. Namely if a smooth projective variety $Y$ satisfies $\kappa_Y=\kappa_S$ for a del Pezzo surface $S$, then $Y\cong S$.
\end{conj}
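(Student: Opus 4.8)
The plan is to extract geometry from the purely categorical invariant $\kappa_Y$ in three stages: reduce to the case that $Y$ is a surface, show that such a $Y$ must be a del~Pezzo surface, and finally read off from $\kappa_Y$ which of the ten del~Pezzo surfaces it is.

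First I would pin down $\dim Y$. From $\kappa_Y=\kappa_S$ we get $\kappa_Y(1)=\kappa_S(1)=0$: indeed $S$, being Fano, is its own Fano host, while $D^b(S)$ cannot be an admissible subcategory of $D^b(\PP^1)$ --- the unique Fano curve --- because $K_0$ of an admissible subcategory is a direct summand of the ambient $K_0$ and $\operatorname{rank}K_0(\PP^1)=2<3\le\operatorname{rank}K_0(S)$. By the characterisation recalled in \S\ref{s5}, $\kappa_Y(1)=0$ forces $Y$ to be neither a point nor $\PP^1$. On the other hand $\kappa_Y(2)=\kappa_S(2)\ge 1$ since $S$ is Fano, so $Y$ has a two-dimensional Fano host $X$, i.e.\ a del~Pezzo surface with $D^b(X)=\langle\mathcal A,D^b(Y)\rangle$; since admissibility of the embedding forces $\dim Y\le\dim X$ for derived categories of smooth projective varieties, $\dim Y\le 2$. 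If $Y$ were a curve it would have genus $\ge 1$ (it is not $\PP^1$), so $K_0(Y)$ would fail to be finitely generated, contradicting that it is a direct summand of $K_0(X)\cong\ZZ^{12-\deg X}$. Hence $\dim Y=2$.

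The crucial step is to show that $Y$ is a del~Pezzo surface. With $X$ as above, $D^b(Y)$ is an admissible subcategory of the derived category of a del~Pezzo surface, which carries a full exceptional collection; since Hochschild homology of an admissible subcategory is a direct summand of that of the ambient category and del~Pezzo surfaces have $h^{p,q}=0$ for $p\ne q$, the same vanishing holds for $Y$, so $q(Y)=p_g(Y)=0$. What is really needed is a \emph{categorical blow-down principle} for surfaces: an admissible subcategory $\mathcal T\subseteq D^b(X)$ of the derived category of a del~Pezzo surface $X$ which is equivalent to $D^b(Y')$ for a smooth projective surface $Y'$ should be obtained from $D^b(X)$ by successively splitting off exceptional objects that are, up to twist and shift, structure sheaves of $(-1)$-curves; equivalently $Y'$ is again del~Pezzo and $X$ is an iterated blow-up of $Y'$ at points in sufficiently general position. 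I expect this to be the main obstacle. It is a Jordan--H\"older/minimal-model statement about admissible subcategories, for which no general technique exists, and the presence of phantom and quasi-phantom subcategories in derived categories of surfaces of general type shows that naive formulations are false; one would have to exploit very specific features of rational surfaces --- the classification of (strong) exceptional collections on them, the well-understood structure of their Grothendieck groups under the Euler pairing, and the absence of phantoms in del~Pezzo derived categories --- to rule out any ``exotic'' admissible copy of a surface derived category inside $D^b(X)$. Granting this principle, $Y$ is one of the ten del~Pezzo surfaces.

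It remains to separate the ten. By the blow-down principle together with Orlov's blow-up formula (Theorem~\ref{t2.3}) and mutation of exceptional objects, the two-dimensional Fano hosts of a del~Pezzo surface $Y$ are exactly the del~Pezzo surfaces obtained from $Y$ by a chain of point blow-ups, and counting deformation classes gives $\kappa_Y(2)=d$ for the unique del~Pezzo surface $Y$ of degree $d$ with $1\le d\le 7$, while $\kappa_Y(2)=8$ for each of the two del~Pezzo surfaces of degree $8$ and $\kappa_Y(2)=9$ for $Y=\PP^2$. This separates all ten except the pair of degree $8$, namely $\PP^1\times\PP^1$ and $\operatorname{Bl}_{\mathrm{pt}}\PP^2$, for which $\kappa_Y(2)$ coincide; to separate these one must pass to higher $n$ and produce a Fano variety of dimension $n\ge 3$ that is a Fano host of exactly one of them --- for instance among Fano threefolds, exploiting the presence or absence of a contractible structure sheaf $\mathcal O_E$ of a $(-1)$-curve --- which is the second genuinely geometric input the argument requires. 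Once the blow-down principle is available no surface outside the ten del~Pezzo classes can realise any $\kappa_S$, because any surface admitting a del~Pezzo Fano host is itself del~Pezzo, so the proof reduces entirely to these two obstacles.
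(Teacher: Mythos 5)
This statement is a \emph{conjecture} in the paper: the authors offer no proof of it, so there is no argument of theirs to compare yours against, and your text should be judged purely as a proposed strategy. As such it is a sensible and well-organized outline, but it is not a proof, and you yourself identify the two places where it breaks down. The first and decisive gap is the ``categorical blow-down principle'': that every admissible subcategory of $D^b(X)$, $X$ del Pezzo, which is equivalent to $D^b(Y')$ for a smooth projective surface $Y'$ arises by splitting off exceptional objects, so that $Y'$ is again del Pezzo. Nothing in this paper (Theorems \ref{t2.2}, \ref{t2.3}, \ref{t2.6} only \emph{produce} semiorthogonal decompositions; they say nothing about classifying all admissible subcategories) gives you access to such a statement, and it is genuinely open: it would in particular require ruling out phantom subcategories of del Pezzo surfaces and classifying all admissible embeddings of surface derived categories into them, which is a Jordan--H\"older-type problem known to fail in nearby settings. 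The reduction to $\dim Y=2$ and the exclusion of curves of positive genus via finite generation of $K_0$ are fine (and consistent with the paper's own Example on elliptic curves), but they only get you to a surface with $q=p_g=0$, which is far from del Pezzo without the missing principle.

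The second gap is that even granting the blow-down principle, your invariant does not separate all ten classes: you compute $\kappa_Y(2)=8$ for both $\PP^1\times\PP^1$ and $\mathrm{Bl}_{\mathrm{pt}}\PP^2$, and the proposed fix --- finding a Fano threefold hosting exactly one of the two --- is left entirely unexecuted. There are also smaller unverified points in the counting itself: for instance, whether $D^b(\PP^2)$ admits an admissible embedding into $D^b(\PP^1\times\PP^1)$ (a rank count on $K_0$ does not forbid it) affects whether $\kappa_{\PP^2}(2)$ is $9$ or $10$, and the claim that \emph{every} two-dimensional Fano host of a del Pezzo $Y$ is an iterated blow-up of $Y$ again relies on the unproved principle. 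In short: a reasonable roadmap that correctly locates the difficulties, but the conjecture remains a conjecture.
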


\medskip

We may refine Bondal's question (Question \ref{q1.0}).
\begin{ques} \emph{(Refined Fano visitor problem)}\\
Let (P) be a property (for example, toric hypersurface or hypersurface or $h^{3,1}=1$) of a Fano variety. 
Let $Y$ be a smooth projective variety. Does there exist a Fano host $X$ of $Y$ with property (P)?
\end{ques}

For instance, we can raise the following question.
\begin{ques}
Let $Y$ be a K3 surface or more generally a holomorphic symplectic variety. Does there exist a Fano host $X$ with $h^{3,1}(X)=1$?
\end{ques}
For instance, Kuznetsov's result in \cite{Kuz1} says that certain cubic 4-folds are Fano hosts, with $h^{3,1}=1$, of K3 surfaces.

When (P) requires $X$ to be a hypersurface in a toric variety, Theorem \ref{t4.3} says that the answer is yes for all complete intersections. 

We can likewise refine the definitions of Fano dimension, Fano number and Fano function with property (P). If we let (P) require $X$ to be a hypersurface in a toric variety and let $\tau_Y(n)$ be the number of deformation equivalence classes of irreducible Fano hosts $X$ of dimension $n$, which is a hypersurface in a toric projective variety, then $\tau_Y:\ZZ_{>0}\to \ZZ_{\ge 0}$, $n\mapsto \tau_Y(n)$ is called the \emph{toric Fano function} of $Y$.

Since toric Fano varieties are well understood at least in low dimension, it may be possible to calculate the toric Fano function. 
\begin{prob}
Calculate the toric Fano functions for low dimensional varieties.
\end{prob}

\medskip

We may consider the Fano dimension as a function $\mathrm{fdim}:\Delta\to \ZZ_{> 0}$ when we are given a family of smooth projective varieties $\fX\to \Delta$ by considering the Fano dimensions of the fibers. Then it gives a stratification of the family and we may ask how nice this stratification is.
\begin{ques}
(1) Let $Y_t$, $t\in \Delta$ be a flat family of smooth projective varieties and let $X_0$ be a Fano variety such that $D^b(X_0)$ contains $D^b(Y_0)$. Under which condition can we find a flat family of Fano varieties $X_t$ such that $D^b(X_t)$ contains $D^b(Y_t)$ for each $t$? \\
(2) Is the function of Fano dimensions semicontinuous?
\end{ques}

\medskip

Often checking that a given variety is Fano can be difficult. So it may be helpful to consider a larger class of hosts which still share some of the nice properties of Fano. 
\begin{defi}
A smooth projective variety is called \emph{weak Fano} if its anticanonical line bundle is nef and big.
\end{defi}

The following is an immediate consequence of Kawamata-Viehweg vanishing theorem which tells us that weak Fano can be quite useful.
\begin{prop}
Let $X$ be a weak Fano variety. Then $h^i(X,\mathcal{O}_X)=0$ for all $i > 0$. Therefore every line bundle on $X$ is an exceptional object in $D^b(X)$.
\end{prop}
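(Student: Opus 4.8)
The plan is to deduce the cohomology vanishing directly from the Kawamata--Viehweg vanishing theorem and then read off the statement about line bundles from the standard computation of $\Ext$-groups on a smooth projective variety.

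First I would recall the Kawamata--Viehweg vanishing theorem in the following form: if $X$ is a smooth projective variety and $L$ is a nef and big line bundle on $X$, then $H^j(X,\omega_X\otimes L)=0$ for all $j>0$, where $\omega_X=K_X$ denotes the canonical bundle. Applying this with $L=K_X^\vee=\omega_X^{-1}$ --- which is nef and big precisely because $X$ is weak Fano --- one gets $\omega_X\otimes L\cong\mathcal{O}_X$, hence $H^j(X,\mathcal{O}_X)=0$ for all $j>0$. This proves the first assertion. Equivalently, via Serre duality $H^i(X,\mathcal{O}_X)\cong H^{\dim X-i}(X,\omega_X)^\vee$, the same vanishing follows by applying Kawamata--Viehweg to the twist $\omega_X\otimes\omega_X^{-1}$.

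For the second assertion, let $\mathcal{L}$ be a line bundle on $X$. Since $X$ is a connected smooth projective variety, $H^0(X,\mathcal{O}_X)=\CC$. For every $k\in\ZZ$, tensoring with $\mathcal{L}^\vee$ gives
\[
\Hom_{D^b(X)}(\mathcal{L},\mathcal{L}[k])=\Ext^k_X(\mathcal{L},\mathcal{L})\cong\Ext^k_X(\mathcal{O}_X,\mathcal{O}_X)\cong H^k(X,\mathcal{O}_X),
\]
which vanishes for $k<0$ trivially, equals $\CC$ for $k=0$, and vanishes for $k>0$ by the first part. Hence $\mathcal{L}$ is an exceptional object of $D^b(X)$.

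I do not anticipate a real obstacle here: the only points that need attention are invoking Kawamata--Viehweg in the correct generality (nef and big, rather than merely ample, which is exactly where ``weak Fano'' rather than ``Fano'' is used) and recalling that the derived $\Hom$ of a line bundle with itself reduces to the cohomology of $\mathcal{O}_X$. Everything else is formal.
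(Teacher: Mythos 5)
Your proof is correct and follows exactly the route the paper intends: the paper offers no written proof beyond remarking that the statement is ``an immediate consequence of Kawamata--Viehweg vanishing,'' which is precisely your application of that theorem to $L=K_X^\vee$ together with the standard identification $\Ext^k(\mathcal{L},\mathcal{L})\cong H^k(X,\mathcal{O}_X)$.
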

Consequently, derived categories of weak Fano varieties always have canonical semiorthogonal decompositions. So it is also reasonable to ask which triangulated categories are contained in the derived categories of weak Fano varieties. 
\begin{ques}\label{q5.10} \emph{(Weak Fano visitor problem)}\\
(1) Let $Y$ be a smooth projective variety. Is there a weak Fano variety $X$ such that $D^b(X)$ contains $D^b(Y)$? \\
(2) Let $\mathcal{T}$ be a triangulated category such as the derived category of a noncommutative scheme (cf. \cite{Kuz1}). Is there a weak Fano variety $X$ such that $D^b(X)$ contains $\mathcal{T}$?
\end{ques}

\bigskip

\bibliographystyle{amsplain}

\end{document}